\newtheorem{theorem}{\bf Theorem}[section]
\newtheorem{definition}[theorem]{\bf Definition}
\newtheorem{corollary}[theorem]{\bf Corollary}
\newtheorem{proposition}[theorem]{\bf Proposition}
\newtheorem{example}[theorem]{\bf Example}
\newcommand{\rme}{\mathrm{e}}
\newcommand{\rmi}{\mathrm{i}}
\newcommand{\rmd}{\mathrm{d}}
\newcommand{\MN}{\mathcal{M}\hspace{-0.15cm}\mathcal{N}}
\begin{document}

\title{Saddle point braids of braided fibrations and pseudo-fibrations}

\author{
Benjamin Bode and Mikami Hirasawa}
\date{}

\address{Instituto de Ciencias Matemáticas (ICMAT), Consejo Superior de Investigaciones Científicas (CSIC), C/ Nicolás Cabrera, 13-15, Campus Cantoblanco, UAM, 28049 Madrid, Spain}
\email{benjamin.bode@icmat.es}

\address{Department of Mathematics, Nagoya Institute of Technology, Showa-Ku, Nagoya City, Aichi, 466-8555, Japan}
\email{hirasawa.mikami@nitech.ac.jp}




\maketitle
\begin{abstract}
Let $g_t$ be a loop in the space of monic complex polynomials in one variable of fixed degree $n$. If the roots of $g_t$ are distinct for all $t$, they form a braid $B_1$ on $n$ strands. Likewise, if the critical points of $g_t$ are distinct for all $t$, they form a braid $B_2$ on $n-1$ strands. In this paper we study the relationship between $B_1$ and $B_2$.
Composing the polynomials $g_t$ with the argument map defines a pseudo-fibration map on the complement of the closure of $B_1$ in $\mathbb{C}\times S^1$, whose critical points lie on $B_2$.
We prove that for $B_1$ a T-homogeneous braid and $B_2$ the trivial braid this map can be taken to be a fibration map. In the case of homogeneous braids we present a visualisation of this fact.
Our work implies that for every pair of links $L_1$ and $L_2$ there is a mixed polynomial $f:\mathbb{C}^2\to\mathbb{C}$ in complex variables $u$, $v$ and the complex conjugate $\overline{v}$ such that both $f$ and the derivative $f_u$ have a weakly isolated singularity at the origin with $L_1$ as the link of the singularity of $f$ and $L_2$ as a sublink of the link of the singularity of $f_u$. 
\end{abstract}
%
\noindent
{\footnotesize{{\textbf{MSC}}: 57K10, 30C10, 32S55, 14P25, 14J17}}\\
{\footnotesize{{\textbf{Keywords}}: homogeneous braid, isolated singularity, P-fibered braid, real algebraic link, saddle point braid, braided open book}}

\section{Introduction}\label{sec:intro}
The braid group on $n$ strands can be defined as the fundamental group of the space of monic, complex polynomials in one variable and of degree $n$. In this way, braids offer a close connection between algebraic geometry and topology. In this paper we illustrate how this connection is beneficial in both directions. We use topological arguments to prove that there exist polynomial maps with singularities and prescribed topological properties and we use insights on polynomial maps to obtain visualisations of certain topological phenomena. Our hope is that we may encourage further research interactions between the two areas.

One well-known intersection of the two areas is the study of links of isolated singularities of polynomial maps. Let $f:=(f_1,f_2):\mathbb{R}^4\to\mathbb{R}^2$ be a polynomial map in variables $x_1$, $x_2$, $x_3$ and $x_4$ that satisfies $f(0,0,0,0)=(0,0)$ and $\tfrac{\partial f_i}{\partial x_j}(0,0,0,0)=0$ for all $i=1,2$, $j=1,2,3,4$. A critical point of $f$ is a point $x$ in $\mathbb{R}^4$ where the gradient matrix $(\tfrac{\partial f_i}{\partial x_j}(x))_{i,j}$ does not have full rank. We denote the set of critical points of $f$ by $\Sigma_f$ and the set of zeros of $f$, i.e., all $x\in\mathbb{R}^4$ with $f(x)=(0,0)$, by $V_f$. In particular, we have $(0,0,0,0)\in \Sigma_f\cap V_f$. We say that $f$ has a \textbf{weakly isolated singularity} at the origin if there is a neighbourhood $U$ of the origin in $\mathbb{R}^4$ such that $U\cap \Sigma_f\cap V_f=\{(0,0,0,0)\}$.

Let $S_{\rho}^3$ denote the Euclidean 3-sphere of radius $\rho$, centered at the origin. If $f$ has a weakly isolated singularity, then the intersection of $V_f$ and $S_{\rho}^3$ results in a closed 1-dimensional submanifold, a link, if $\rho$ is chosen sufficiently small. Furthermore, the link type of $L_f:=V_f\cap S_{\rho}^3$, that is, its ambient isotopy class in $S^3$, is independent of the radius $\rho$, as long as $\rho$ is sufficiently small. This link is thus a topological property of the singularity. We call $L_f$ the link of the singularity.

The name \textit{weakly} isolated singularity is justified in the sense that it does not impose any restrictions on the link types that arise in this way. Akbulut and King proved that every link is the link of a weakly isolated singularity of some polynomial map \cite{ak}. However, there is also a stronger notion of isolation. We say that the origin is an \textbf{isolated singularity} of $f$ if there is a neighbourhood $U$ of the origin in $\mathbb{R}^4$ such that $U\cap \Sigma_f=\{(0,0,0,0)\}$. Naturally, isolated singularities are weakly isolated. A link type is called \textbf{real algebraic} if it arises as the link of an isolated singularity of some real polynomial map.

The definitions above generalise to other dimensions as well as to complex polynomial maps. While links of isolated singularities of complex plane curves $f:\mathbb{C}^2\to\mathbb{C}$ are completely classified, it is not known which links are real algebraic.

A link $L$ in $S^3$ is called \textbf{fibered} if it is the binding of an open book decomposition of $S^3$. In other words, there is a fibration map $\varphi:S^3\backslash L\to S^1$ with a specified behaviour on a tubular neighbourhood of $L$. The term \textit{fibration} means that $\varphi$ has no critical points, i.e., no points $x\in S^3\backslash L$, where the directional derivatives of $\varphi$ in any three linearly independent directions are all 0. The required behaviour of $\varphi$ on the tubular neighbourhood $N(L)$ of $L$ is as follows. Every connected component of $N(L)$ is an open solid torus. Removing $L$ leaves $S^1\times(D\backslash \{0\})$, where $D$ denotes the open (unit) disk in $\mathbb{C}$. On $S^1\times(D\backslash \{0\})$ we require that $\varphi(t,z)=\arg(z)$, where $\arg$ denotes the argument map that sends a non-zero complex number $z$ to $\tfrac{z}{|z|}$. 

The set of fibered links is a promising candidate for the still unknown set of real algebraic links. Milnor proved that all real algebraic links are fibered \cite{milnor}, while Benedetti and Shiota conjecture that the two sets of links are identical \cite{benedetti}.

In the last couple of years several constructions of polynomials with isolated singularities have been proposed \cite{bode:real, bode:satellite, bode:thomo}. These have not resulted in a proof of Benedetti's and Shiota's conjecture, but we now have several infinite families of fibered links that are known to be real algebraic. All of these constructions are based on braids and produce so-called \textbf{semiholomorphic polynomials}. This means that when the constructed function $f:\mathbb{R}^4\to\mathbb{R}^2$ is written as a \textbf{mixed polynomial} $f:\mathbb{C}^2\to\mathbb{C}$ in complex variables $u$, $v$ and their complex conjugates $\bar{u}$ and $\bar{v}$ (which is clearly possible for any real polynomial map), then $f$ is holomorphic with respect to $u$, i.e., $\tfrac{\partial f}{\partial\bar{u}}=0$. From now on we write $f_u$ for $\tfrac{\partial f}{\partial u}$.

With a similar construction we obtained the semiholomorphic version of Akbulut's and King's result: Every link type arises as the link of a weakly isolated singularity of a semiholomorphic polynomial \cite{bode:ak}. One of the main results of this paper is an improvement on this construction. We may perform the construction from \cite{bode:ak} to obtain the desired $f$ and obtain any given link $L_2$ as a sublink of the link of a singularity of $f_u$, i.e., $L_2$ is a subset of the connected components of $L_{f_u}$. We thus have a certain control not only over the topology of $V_f$ but also over the topology of $V_{f_u}$ at the same time.

\begin{theorem}\label{thm:semi}
Let $L_1$ and $L_2$ be links in $S^3$. Then there exists a semiholomorphic polynomial $f:\mathbb{C}^2\to\mathbb{C}$ such that both $f$ and $f_u$ have a weakly isolated singularity at the origin with $L_1$ as the link of the singularity of $f$ and $L_2$ a sublink of the link of the singularity of $f_u$.
\end{theorem}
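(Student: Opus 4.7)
The plan is to use the correspondence from \cite{bode:ak} that associates, to each loop $g_t$ of monic polynomials of degree $n$, a semiholomorphic polynomial $f:\mathbb{C}^2\to\mathbb{C}$ with weakly isolated singularity at the origin and $L_f$ equal to the closure of the root braid $B_1$ of $g_t$. A direct computation shows that $f_u$, viewed as a polynomial in $u$ with $v$ fixed, has (up to scaling) the same roots as $g'_{\arg v}$, so $L_{f_u}$ is the closure of the critical point braid $B_2$. Theorem~\ref{thm:semi} therefore reduces to the following braid-theoretic task: given $L_1$ and $L_2$, construct a loop of monic polynomials $g_t$ whose root braid has closure $L_1$ and whose critical point braid has closure containing $L_2$ as a sublink.

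For this construction, let $\alpha$ be a braid on $a$ strands with closure $L_1$ and $\beta$ a braid on $b$ strands with closure $L_2$. The idea is to form a braid $B_1$ on $a+2b$ strands by augmenting $\alpha$ with $2b$ strands placed, in a region of $\mathbb{C}$ disjoint from the support of $\alpha$, as close pairs at positions $\beta_j(t)\pm\epsilon$ around each strand $\beta_j(t)$ of $\beta$. A Taylor expansion of $g_t$ at the midpoint $\beta_j(t)$ shows that, for small $\epsilon>0$, the polynomial has a critical point within $O(\epsilon^2)$ of $\beta_j(t)$. The braid traced by these $b$ critical points is isotopic to $\beta$ for small enough $\epsilon$ and forms a disjoint set of closure components of $B_2$, since these strands are disconnected in the underlying permutation from the other critical-point strands. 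Consequently $L_2$ appears as a sublink of $L_{f_u}$.

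The main obstacle is that the naive closure of $B_1$ is $L_1\sqcup(\text{$2$-cable of }L_2)$ rather than $L_1$. We resolve this by introducing the $2b$ new strands via $2b$ Markov stabilizations of $\alpha$, an operation that preserves the closure. Each Markov stabilization adds a crossing between a new strand and a designated neighboring strand; when the neighbor belongs to $\alpha$ or to a different doubled-$\beta$ pair, realizing this crossing geometrically requires a temporary excursion of the new strand away from its doubled-$\beta$ position. Such excursions can be routed through corridors chosen to avoid tangling with other $\beta$-strands, so that the braid of midpoint critical points remains isotopic to $\beta$. Coordinating these geometric requirements, so that the root braid has closure $L_1$ while the critical point braid still contains $\beta$ up to isotopy, is the technical heart of the argument.

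Finally, weak isolation follows naturally. For $f$, it is the defining property of the construction of \cite{bode:ak}. For $f_u$, the zero set $V_{f_u}$ is the cone over the closure of $B_2$ near the origin; since $B_2$ is a genuine braid, this cone is smooth away from the origin, and the same argument as for $f$ yields $V_{f_u}\cap\Sigma_{f_u}=\{0\}$ in a neighborhood of the origin. Combined with $L_2\subseteq L_{f_u}$ established above, this gives the theorem.
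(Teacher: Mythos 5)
Your approach differs substantially from the paper's and, while the general reduction to constructing a loop of polynomials with prescribed root braid and critical point braid is correct in spirit, there are genuine gaps that the proposal does not resolve.

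First, your doubling construction puts each pair of auxiliary roots at $\beta_j(t)\pm\epsilon$, and the Taylor argument for a nearby critical point is valid only while the pair stays $\epsilon$-close. But the Markov stabilizations you invoke require the new strand to make a \emph{large excursion} across the diagram to link with a strand of $\alpha$ or another $\beta$-pair. During that excursion the two strands of the pair are no longer close, the Taylor expansion no longer applies, and there is no reason for a critical point of $g_t$ to track $\beta_j(t)$. You acknowledge this as ``the technical heart of the argument'' but do not actually show how to coordinate the excursions with the critical point behaviour; this is precisely where the argument would fail, and it is not a detail one can wave away. Second, you have tacitly assumed that any loop of monic polynomials $g_t$ with prescribed root braid can be plugged into the machinery of \cite{bode:ak} to produce a semiholomorphic \emph{polynomial} $f$. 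That is not the case: for $f$ defined via Eq.~\eqref{eq:ffromg} to be a polynomial in $v,\bar v$, the loop $g_t$ must satisfy the symmetry $g_{t+\pi}=g_t$ (or $g_{t+\pi}(u)=-g_t(-u)$). This forces the construction in \cite{bode:ak} through singular braids and the correcting term $A(v,\bar v)$, and it squares the saddle point braid (since one uses $G_{2t}$). Your proof does not engage with the symmetry requirement at all, which is why the statement involves $B_2^2$ rather than $B_2$ in the paper's proof. Third, the critical points of your augmented $g_t$ also include roughly $a-1$ critical points near the $\alpha$-region and at least one ``cross'' critical point between the two regions; controlling those so that $V_{f_u}$ has a weakly isolated singularity (i.e., the saddle point set is a genuine braid for all $t$) is not addressed.

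The paper avoids all of this by decoupling the two constraints: Theorem~\ref{thm:saddle} supplies a loop $h_t$ whose root braid is trivial and whose saddle point braid is any prescribed $B_2$; this is then composed with the singular-braid loop $g_t$ from \cite{bode:ak} whose root braid resolves to $B_1$ and whose saddle point braid is trivial. The composition has root braid $B_{sing}$ and saddle point braid $B_2$, and passing to $G_{2t}$ enforces the symmetry needed to get a genuine polynomial, at the cost of doubling both braids. The term $A(v,\bar v)$ resolves the singular crossings without affecting $f_u$. You would need to rebuild essentially all of this scaffolding for your geometric construction to go through, at which point the doubling/excursion idea adds complexity rather than removing it.
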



A \textbf{braid} on $n$ strands is a set of disjoint parametric curves
\begin{equation}\label{eq:braidpara}
\bigcup_{t\in[0,2\pi]}\bigcup_{j=1}^n(z_j(t),t)\subset\mathbb{C}\times [0,2\pi],
\end{equation}
where $z_j(t):[0,2\pi]\to\mathbb{C}$ are smooth functions with $z_i(t)\neq z_j(t)$ for all $t\in[0,2\pi]$ and all $i\neq j$. Furthermore, for every index $i$ there should be an index $j$ with $z_i(0)=z_j(2\pi)$. Since the curves are parametrised by the height coordinate $t$, no strand can loop back and cross itself. The parametrisation induces an orientation on the braid, so that it is positively transverse to the horizontal planes $\mathbb{C}\times\{t\}$ for all $t\in[0,2\pi]$. Occasionally, we might encounter a set of parametric curves $B$ in $\mathbb{C}\times[0,2\pi]$ that are transverse to all of these planes, but that are not parametrised by $t$ itself. In this case, there is a choice of orientation for $B$ so that is a braid, but it might not match the orientation induced by the parametrisation. We could thus consider $B$ as an unoriented braid.  

A braid isotopy is an isotopy of a braid in $\mathbb{C}\times[0,2\pi]$ that fixes the start- and endpoints at $t=0$ and $t=2\pi$, and maintains the braid property throughout the isotopy. Often a braid isotopy class is also called a braid. When we want to emphasize that we are referring to a representative of an isotopy class instead of the isotopy class itself, we usually call the representative a geometric braid.

Fixing the start- and endpoints $z_i(0)$, we obtain a group structure on the set of isotopy classes of braids on $n$ strands, where the group operation is given by concatenation and rescaling of the interval. This way we may interpret the braid group on $n$ strands $\mathbb{B}_n$ as the fundamental group of the configuration space of $n$ distinct, unmarked points in the complex plane, where the chosen set of start- and endpoints $z_i(0)$ corresponds to the basepoint of the loop. By the fundamental theorem of algebra this configuration space is diffeomorphic to the space of monic polynomials in one complex variables of degree $n$ and with distinct roots. Every geometric braid, parametrised as in Eq.~\eqref{eq:braidpara}, corresponds to a loop in this space of polynomials, given by $g_t:\mathbb{C}\to\mathbb{C}$,
\begin{equation}
g_t(u)=\prod_{j=1}^n(u-z_j(t)).
\end{equation}
Conversely, the roots of any loop in this space of polynomials form a geometric braid on $n$ strands.

This paper focuses on a subset of this space of polynomials. We define $\widehat{X}_n$ to be the space of monic polynomials of degree $n$ with distinct roots, distinct critical values and constant term not equal to any of its critical values. If the critical values are distinct, so are the critical points. Since the critical points of a complex polynomial $g$ in one variable $u$ are exactly the roots of $\tfrac{\partial g}{\partial u}$, which is a polynomial of degree $n-1$, it follows that we can associate to every loop $g_t$ in $\widehat{X}_n$ three braids: one braid on $n$ strands that is formed by the roots of $g_t$, one braid that is formed by the critical values and one braid on $n-1$ strands that is formed by the critical points of $g_t$. The relation between the braid that is formed by the roots of a loop in $\widehat{X}_n$ with constant term equal to 0 and the braid that is formed by its critical values was studied in detail in \cite{bode:adicact}.

By holomorphicity each critical point of a polynomial in $\widehat{X}_n$ must be a hyperbolic saddle point. For this reason we also call the braid that is formed by the critical points of $g_t$ the \textbf{saddle point braid} of $g_t$. One major topic of this article is the relation between the braid formed by the roots and the saddle point braid of a given loop of polynomials $g_t$ in $\widehat{X}_n$.

\begin{theorem}\label{thm:saddle}
Let $B$ and $B'$ be braids on $n$ and $n-1$ strands, respectively. Then there is a loop $g_t:\mathbb{C}\to\mathbb{C}$ in $\widehat{X}_n$ such that
\begin{itemize}
\item the roots $\{(u,t)\in\mathbb{C}\times [0,2\pi]|g_t(u)=0\}$ form the braid $B$,
\item the saddle point braid, given by $\{(u,t)\in\mathbb{C}\times [0,2\pi]|\tfrac{\partial g_t}{\partial u}(u)=0\}$, is $B'$.
\end{itemize}
\end{theorem}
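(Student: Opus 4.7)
The plan is to show that the map $\Psi:\pi_1(\widehat{X}_n)\to\mathbb{B}_n\times\mathbb{B}_{n-1}$, sending the homotopy class of a loop $g_t$ in $\widehat{X}_n$ to the pair (root braid, saddle point braid), is surjective; a representative of any preimage of $(B,B')$ is then the required loop.

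First, I would check surjectivity of the root-braid map $\pi_1(\widehat{X}_n)\to\pi_1(X_n)=\mathbb{B}_n$, where $X_n$ is the configuration space of $n$ distinct points in $\mathbb{C}$. The complement $X_n\setminus\widehat{X}_n$ is a union of three complex hypersurfaces (two critical values coincide, a critical value equals $g(0)$, or two critical points coincide), each of real codimension at least $2$; so any loop in $X_n$ representing $B$ admits a small generic perturbation into $\widehat{X}_n$ representing the same braid.

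The key step is to show that the kernel $K:=\ker(\pi_1(\widehat{X}_n)\to\pi_1(X_n))$ surjects onto $\mathbb{B}_{n-1}$ under the saddle-point map $\Phi_*$. For each $i=1,\ldots,n-2$ I pick a basepoint in $\widehat{X}_n$ at which the $n-1$ critical points of $g$ are arranged along a line in $\mathbb{C}$, together with a small meridian loop $\mu_i$ in $\widehat{X}_n$ around a smooth point of the critical-point-coincidence hypersurface $\Delta\subset X_n$ at which specifically the $i$-th and $(i{+}1)$-st critical points are about to collide. Since $\mu_i$ bounds a normal disc in $X_n$, we have $[\mu_i]\in K$. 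A local Picard--Lefschetz computation, using that near a smooth point of $\Delta$ the two colliding critical points can be written as $u_0\pm c\sqrt{\epsilon}$ for a transverse coordinate $\epsilon$ on $X_n$, gives $\Phi_*([\mu_i])=\sigma_i$, the $i$-th Artin generator of $\mathbb{B}_{n-1}$. Hence $\Phi_*(K)\supseteq\{\sigma_1,\ldots,\sigma_{n-2}\}$, so $\Phi_*(K)=\mathbb{B}_{n-1}$. Concatenating a loop $\gamma_1$ realising $(B,B_0')$ with some $\gamma_2\in K$ realising $(1,(B_0')^{-1}B')$ then produces a loop in $\widehat{X}_n$ with root braid $B$ and saddle point braid $B'$.

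The main obstacle will be the local Picard--Lefschetz calculation: verifying that along a transverse slice to $\Delta$ the two colliding critical points depend holomorphically on $\pm\sqrt{\epsilon}$, so that the meridian monodromy is exactly one half-twist and the other $n-3$ critical points stay unperturbed. Keeping the meridian inside $\widehat{X}_n$ reduces to the statement that the critical-value-coincidence and constant-term-equals-critical-value hypersurfaces meet $\Delta$ in codimension $\geq 2$ at its generic smooth points, which is a standard discriminant transversality computation in the space of monic polynomials.
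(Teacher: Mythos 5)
Your proposal is correct in outline but takes a genuinely different route from the paper. The paper realizes $B'$ directly: it parametrises $B'$ by curves $c_j(t)$ and sets $h_t(u)=n\int_0^u\prod_{j}(w-c_j(t))\,\rmd w$, a loop whose critical points literally are the $c_j(t)$; the (unknown) root braid $A$ of $h_t$ is then corrected by concatenating with a loop from Proposition~\ref{prop:key} whose roots form $A^{-1}B$. The essential mechanism in that proposition is that the elementary loops are realised as $p(u)-\gamma(t)$, varying only the constant term, so $\tfrac{\partial}{\partial u}$ of the whole family is $t$-independent, the critical points do not move at all, and the saddle point braid of the correcting loop is trivial. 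Your argument is the dual: you realize $B$ first by generic perturbation (with unknown saddle braid $B_0'$) and correct the saddle braid by showing $\ker(\pi_1(\widehat X_n)\to\mathbb B_n)$ surjects onto $\mathbb B_{n-1}$ via Picard--Lefschetz meridians around the discriminant $\Delta$ of $g'$. That is a sound and conceptually appealing Zariski--van Kampen--type lemma, but it carries overhead the integration trick sidesteps: you must choose approach paths from the basepoint to each meridian along which the saddle-point motion is trivial (otherwise $\Phi_*([\mu_i])$ is only a conjugate of a generator, and one then needs the extra observation that $\Phi_*(K)$ is a subgroup containing a full conjugacy class of generators), verify the $\pm\sqrt\epsilon$ local model after pulling the classical discriminant back through the derivative map, and carry out the transversality checks you flag to keep the meridian inside $\widehat X_n$. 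All of these are believable and fillable, so the approach works, but the paper's route is markedly shorter. One small correction: your third listed hypersurface (two critical points coincide) is contained in the first (two critical values coincide), so $X_n\setminus\widehat X_n$ is a union of two, not three, complex hypersurfaces; this does not affect the codimension argument.
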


At almost every height $t\in[0,2\pi]$ we may order the strands of a braid at that height by the real parts of their complex coordinate. That is, if $\bigcup_{j=1}^n(z_j(t),t)$, then the first strand at height $t=t_*$ is the strand with smallest $\text{Re}(z_j(t_*))$. The second strand has the next smallest value of $\text{Re}(z_j(t_*))$ and so on. The braid group on $n$ strands is generated by the Artin generators $\sigma_i$, $i=1,2,\ldots,n-1$, which correspond to positive half-twists between the $i$th strand and the $i+1$th strand.

\begin{definition}\label{def:homo}
A braid $B=\prod_{j=1}^{\ell}\sigma_{i_j}^{\varepsilon_j}$ on $n$ strands is called \textbf{homogeneous} if 
\begin{enumerate}
\item[i)] for every $k\in\{1,2,\ldots,n-1\}$ there is a $j\in\{1,2,\ldots,\ell\}$ with $i_j=k$,
\item[ii)] for every $j,j'\in\{1,2,\ldots,\ell-1\}$, $i_j=i_{j'}$ implies $\varepsilon_j=\varepsilon_{j'}$.
\end{enumerate}
\end{definition}

In other words, a braid is homogeneous if for all $i\in\{1,2,\ldots,n-1\}$ its word contains the generator $\sigma_i$ if and only if it does not contain its inverse $\sigma_{i}^{-1}$. In the literature (in particular in \cite{rudolph2}) braids are sometimes called homogeneous if they satisfy the second condition above and \textit{strictly homogeneous} if they satisfy both conditions above.

Since the start- and endpoints of any geometric braid $B$ match, we may identify the $t=0$-plane and the $t=2\pi$-plane to obtain a link in $\mathbb{C}\times S^1$. Embedding this open solid torus as an untwisted neighbourhood of a planar circle in $S^3$ results in a well-defined link in $S^3$, the closure of the braid $B$, whose ambient isotopy class in $S^3$ does not depend on the representative of the braid isotopy class of $B$. 

Stallings showed that closures of homogeneous braids are fibered links \cite{stallings}. In fact, Definition~\ref{def:homo} has a generalisation in the form of T-homogeneous braids, which were introduced and shown to close to fibered links by Rudolph \cite{rudolph2}. In Section~\ref{sec:loops} we will review the definition and some properties of T-homogeneous braids. Closures of T-homogeneous braids have also recently been proved to be real algebraic \cite{bode:thomo}.

\begin{theorem}\label{thm:thomosaddle}
Let $B$ be a T-homogeneous braid on $n$ strands. Then there is a loop $g_t:\mathbb{C}\to\mathbb{C}$ in $\widehat{X}_n$ such that
\begin{itemize}
\item the roots $\{(u,t)\in\mathbb{C}\times [0,2\pi]|g_t(u)=0\}$ trace out the braid $B$,
\item $(u,t)\mapsto\arg g_t(u)$ is a fibration of $(\mathbb{C}\times S^1)\backslash B$ over $S^1$,
\item the saddle point braid, given by $\{(u,t)\in\mathbb{C}\times [0,2\pi]|\tfrac{\partial g_t}{\partial u}(u)=0\}$, is the trivial braid on $n-1$ strands.
\end{itemize}
\end{theorem}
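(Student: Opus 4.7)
The plan is to construct the loop in the special form $g_t(u) = h(u) + C(t)$, where $h$ is a fixed monic polynomial of degree $n$ with distinct critical points $c_1,\ldots,c_{n-1}$ and distinct critical values $v_k := h(c_k)$, and $C : [0, 2\pi] \to \mathbb{C}$ is a loop avoiding the points $-v_k$. Such a $g_t$ lies in $\widehat{X}_n$ provided $h(0) \neq v_k$ for all $k$, and, since $g_t'(u) = h'(u)$ is independent of $t$, its saddle point braid is automatically the constant (hence trivial) braid on $n-1$ strands. It remains to arrange the root braid to be $B$ and to make $\arg g_t$ a fibration.

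Under this ansatz the roots of $g_t$ are the $h$-preimages of $-C(t)$, so the root braid is the lift of $t \mapsto -C(t)$ in $\mathbb{C} \setminus \{v_k\}$ under the branched cover $h : \mathbb{C} \to \mathbb{C}$. A short computation of the gradient of $\arg g_t(u)$, using holomorphicity of $g_t$ in $u$, shows that its only possible critical points lie at pairs $(c_k,t)$, and that $\arg g_t$ has no critical point there exactly when
\begin{equation*}
\operatorname{Im}\!\left(\frac{\dot C(t)}{C(t) + v_k}\right) \neq 0,
\end{equation*}
i.e., $\arg(C(t) + v_k)$ is strictly monotonic in $t$ for each $k$. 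I would now use the T-homogeneous data of $B$ to build $h$ and $C$: a T-homogeneous braid comes equipped with a labelled tree (or forest) and a word that is positive in a corresponding system of T-band generators. I would choose $h$ so that the critical values $v_k$ realise this tree geometrically in $\mathbb{C}$, with $h(0)$ far from all $v_k$, and take $C(t)$ to be a loop based at a remote basepoint that consecutively performs small loops around the $-v_k$ in the order and with the signs prescribed by the T-homogeneous word of $B$. The standard braid-monodromy description of the branched cover $h$ then identifies the resulting root braid with $B$.

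The main obstacle is verifying the strict monotonicity of $\arg(C(t) + v_k)$ for every $k$. Here the T-homogeneous hypothesis is essential: each T-band contributes to $C(t)$ only loops around a fixed $-v_k$ and always with the same sign, so the winding of $C(t)$ around $-v_k$ has definite sign. Placing the $v_k$ as the geometric realisation of the tree and straightening $C(t)$ into a ``star-of-loops'' based at a single faraway point, I would argue from the tree combinatorics that while $C(t)$ circles one $-v_k$ the angle subtended at any other $-v_{k'}$ is only ever perturbed in a way compatible with that definite sign; making this global monotonicity precise is the hard technical step and is the reason the homogeneous sign condition from Definition~\ref{def:homo} (generalised by T-homogeneity) is indispensable. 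A final smoothing and reparametrisation in $t$ then removes any residual stationary points of the angular functions, producing the required loop $g_t(u) = h(u) + C(t)$ in $\widehat{X}_n$ satisfying all three conclusions.
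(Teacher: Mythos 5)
Your ansatz $g_t(u)=h(u)+C(t)$ is the same starting point as the paper's proof (Proposition~\ref{prop:key} writes $p(u)-\gamma_{j_k}(t)^{\varepsilon_k}$, which is of this form on each subinterval), and your identification of the root braid via the monodromy of $h$ and of the trivial saddle point braid is correct. However, the step you flag as ``the hard technical step'' --- establishing the simultaneous strict monotonicity of all $\arg(v_k+C(t))$ while staying inside the ansatz --- is not merely hard; it is impossible in general. If $C$ is the concatenation of small loops $-\gamma_{j}$ around the points $-v_{j}$ based at $0$, then on the subinterval where $C$ encircles $-v_{j}$, for any $k\neq j$ the curve $v_k+C(t)$ is a loop based at $v_k$ that does not wind around $0$. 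Monotone winding $0$ would force $\arg(v_k+C(t))$ to be constant, hence $C(t)$ to lie on a single ray, which is incompatible with $C$ encircling $-v_j$ for $v_j\notin\mathbb{R}_{>0}v_k$. Reparametrising in $t$ cannot fix this, since the obstruction is to the shape of the curve $v_k+C(t)$, not to its parametrisation.

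The paper's proof gets around exactly this point by dropping the constraint $g_t=h+C$. It observes that the map $\theta_n:\widehat{X}_n\to\widehat{V}_n$ sending a polynomial to its critical values and constant term is a covering map, so any homotopy of the loop of critical values in $V_n$ (extended to $\widehat{V}_n$) lifts to a homotopy of $g_t$ in $\widehat{X}_n$; moreover, since all polynomials along the way stay in $\widehat{X}_n$, the braid type of the roots and of the saddle points is preserved throughout. Starting from $\prod_k(p-\gamma_{j_k}^{\varepsilon_k})$, one deforms the loop of critical values so that on the $k$-th subinterval all $v_i(t)$ with $i\neq j_k$ are stationary (then slightly perturbed to have nonzero angular speed) while $v_{j_k}(t)$ moves monotonically on an ellipse around $0$. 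The T-homogeneity hypothesis enters exactly as you expected: it ensures the sign of the angular speed of each $v_i$ is consistent across all the subintervals in which it moves, so no critical points of $\arg(g)$ appear. The lifted $g_t$ is no longer of the form $h(u)+C(t)$, but its roots still form $B$ and its saddle point braid is still trivial. This lifting-and-deforming step is the missing ingredient; without it your approach does not close.
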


The condition that the map $(u,t)\mapsto\arg g_t(u)$ is a fibration means that it does not have any critical points. If the map $(u,t)\mapsto\arg g_t(u)$ for a given loop of polynomials $g_t$ has a finite number of non-degenerate critical points, i.e., it is a circle-valued Morse-map, we call it a pseudo-fibration.

Using a construction similar to the one employed in \cite{bode:thomo} results in the following theorem.

\begin{theorem}\label{thm:thomosemi}
Let $L$ be the closure of a T-homogeneous braid $B$ on $n$ strands. Then there is a semiholomorphic polynomial $f:\mathbb{C}^2\to\mathbb{C}$ such that $f$ has an isolated singularity with link $L$ and $f_u$ has a weakly isolated singularity whose link is the unlink with $n-1$ components.
\end{theorem}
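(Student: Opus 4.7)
The plan is to feed the output of Theorem~\ref{thm:thomosaddle} into the braided construction of semiholomorphic polynomials used to prove the real algebraicity of closures of T-homogeneous braids in~\cite{bode:thomo}, and then to read off the consequences for the derivative $f_u$ directly from the triviality of the saddle point braid.

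First I would invoke Theorem~\ref{thm:thomosaddle} to produce a loop $g_t:\mathbb{C}\to\mathbb{C}$ in $\widehat{X}_n$ whose roots trace out $B$, whose argument map $(u,t)\mapsto\arg g_t(u)$ is a genuine fibration on $(\mathbb{C}\times S^1)\setminus B$, and whose saddle point braid is the trivial braid on $n-1$ strands. Since $g_t(u)$ is $2\pi$-periodic in $t$, it can be written as a finite Laurent polynomial $g_t(u)=\sum_{k}a_k(u)\rme^{\rmi kt}$ with $a_k(u)\in\mathbb{C}[u]$. Replacing positive powers of $\rme^{\rmi t}$ by $v$ and negative powers by $\bar v$, and then balancing the $v$- and $\bar v$-degrees by multiplying through by a suitable power of $|v|^2$, produces a mixed polynomial $p(u,v,\bar v)$ which agrees (up to a nonvanishing radial factor) with $g_t(u)$ on the torus $|v|=1$. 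Applying the standard radial rescaling $u\mapsto u/|v|^N$ for $N$ sufficiently large then yields the desired semiholomorphic polynomial $f(u,v,\bar v)$. The fact that $\arg g_t$ is a genuine fibration, rather than only a pseudo-fibration, is precisely what upgrades the singularity of $f$ at the origin from weakly isolated to isolated, and by construction the link of this isolated singularity is the closure of $B$, namely $L$.

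Next I would analyse $f_u$. Because $f$ is holomorphic in $u$, differentiation with respect to $u$ commutes with the substitutions used to build $f$ from $g_t$, so on the torus $|v|=1$ the zero set of $f_u$ coincides, after the rescaling, with the zero set of $\tfrac{\partial g_t}{\partial u}(u)$, which is by definition the saddle point braid of $g_t$. Theorem~\ref{thm:thomosaddle} guarantees that this braid is trivial, so $L_{f_u}:=V_{f_u}\cap S_\rho^3$ is isotopic to the closure of the trivial braid on $n-1$ strands, i.e. the $(n-1)$-component unlink. Weak isolation of the singularity of $f_u$ at the origin should then be automatic: the critical points of any polynomial in $\widehat{X}_n$ are by definition distinct from its constant term, so the strands of the saddle point braid remain bounded away from $u=0$ uniformly in $t$, and no branch of $V_{f_u}$ can approach the origin through $|v|\neq 0$.

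I expect the main obstacle to be the bookkeeping needed to justify that the radial rescaling which turns $p$ into an isolated singularity of $f$ does not introduce spurious components of $V_{f_u}$ inside a small ball around the origin, and that the zero set of $f_u$ near the origin is really isotopic to the closure of the saddle point braid and not to some enlargement of it. Resolving this amounts to obtaining uniform lower bounds for $\bigl|\tfrac{\partial g_t}{\partial u}(u)\bigr|$ away from the saddle point braid and for $|g_t(u)|$ away from $B$, both of which follow from the fibration property and the braid property furnished by Theorem~\ref{thm:thomosaddle}, combined with a standard transversality argument along the $v$-axis as used in~\cite{bode:ak,bode:thomo}. Once these estimates are in place, the three conclusions of the theorem follow from the analysis above.
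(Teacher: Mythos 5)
You correctly identify the strategic ingredient --- feed the loop $g_t$ from Theorem~\ref{thm:thomosaddle} into the semiholomorphic construction and read off $V_{f_u}$ from the trivial saddle point braid --- and your observations about how $f_u$ inherits the saddle point braid and why its singularity is weakly isolated are sound. However, there is a genuine gap in the step where you turn $g_t$ into a mixed polynomial $f$.

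The radial rescaling of Eq.~\eqref{eq:ffromg} only yields an actual \emph{polynomial} in $u$, $v$, $\bar v$ when $g_t$ satisfies a parity constraint on its Fourier coefficients, namely $g_{t+\pi}=g_t$ or $g_{t+\pi}(u)=-g_t(-u)$; the loop furnished by Theorem~\ref{thm:thomosaddle} carries no such symmetry. Your proposed fix of ``multiplying through by a suitable power of $|v|^2$'' does not repair this: if a single coefficient of $g_t$ contains Fourier modes of mixed parity, no one overall power of $v\bar v$ can balance all its terms simultaneously, and assigning different powers of $v\bar v$ to individual terms destroys the radially weighted homogeneous structure on which the isolated-singularity argument depends. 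This is exactly why the paper's proof of Theorem~\ref{thm:thomosemi} does not go directly from the Theorem~\ref{thm:thomosaddle} loop. Instead, following \cite{bode:thomo}, it starts from a symmetric loop whose roots close to the unknot, deforms the critical values so some pass through $0$ (creating a \emph{singular} braid of roots), observes that the critical points --- and hence the saddle point braid --- remain distinct and trivial throughout this deformation, and then adds a perturbation $A(v,\bar v)$ to resolve the singular crossings into $B$ and produce an isolated singularity with link $L$. The crucial point you omit is that $A$ depends only on $v$ and $\bar v$, so $(f+A)_u=f_u$ and the zero set of $(f+A)_u$ is still exactly the cone over the trivial saddle point braid, i.e.\ the $(n-1)$-component unlink. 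Without the symmetric singular-braid detour and the role of $A$, the construction of $f$ in your argument is not a well-defined polynomial map.
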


The structure of the rest of the article is as follows. Section~\ref{sec:loops} reviews some properties of loops of polynomials in $\widehat{X}_n$, resulting in a proof of Theorem~\ref{thm:thomosaddle} and Theorem~\ref{thm:saddle}. We also give a new upper bound on the Morse-Novikov number of a link, the minimal number of critical points of a pseudo-fibration. In Section~\ref{sec:visual} we then study the argument map $\arg(g_t)$ of a loop of polynomials $g_t$ in $\widehat{X}_n$ and offer different visualisations of the fibration for homogeneous braids as well as of pseudo-fibrations for general braids. In Section~\ref{sec:sing} we modify the constructions of (weakly) isolated singularities from \cite{bode:ak} and \cite{bode:thomo} to prove Theorem~\ref{thm:semi} and Theorem~\ref{thm:thomosemi}.

\section{Saddle point braids}\label{sec:loops}

The space $\widehat{X}_n$ of monic polynomials of fixed degree $n$ and distinct roots, distinct critical values and constant term different from all critical values is defined in such a way that both the roots and the critical points of a loop in $\widehat{X}_n$ form a (closed) braid in $\mathbb{C}\times S^1$. A parametrisation $z_j(t)$, $j=1,2,\ldots,n$, of the roots can be easily obtained by decomposing the polynomials into their irreducible factors, i.e., $g_t(u)=\prod_{j=1}^n(u-z_j(t))$. Likewise, the critical points of a loop $g_t$, which form the saddle point braid, are given by $\bigcup_{j=1}^{n-1}(c_j(t),t)$ where $\tfrac{\partial g_t}{\partial u}(u)=n\prod_{j=1}^{n-1}(u-c_j(t))$. The relation between these polynomials is then obviously $g_t(u)=\int_0^u \tfrac{\partial g_t}{\partial u}(w)\rmd w$.

We may write $v_j(t)=g_t(c_j(t))$, $j=1,2,\ldots,n-1$, for the critical values of $g_t$. The relation between the braid that is parametrised by the $v_j$ (or more precisely the union of the $v_j$ and $\{0\}\times[0,2\pi]$) and the braid that is formed by the roots of $g_t$ was the object of study in \cite{bode:adicact}. One aspect that continues to be relevant in this paper is that deformations of the braid of critical values lift to deformations in $\widehat{X}_n$ and thus also to deformations of the braid of roots and the saddle point braid.

More precisely, we may write
\begin{equation}
V_n:=\{(v_1,v_2,\ldots,v_{n-1})\in(\mathbb{C}\backslash\{0\})^{n-1}: v_i\neq v_j \text{ if }i\neq j\}/S_{n-1},
\end{equation}
for the space of critical values of polynomials in $\widehat{X}_n$, where the symmetric group $S_{n-1}$ acts on $n-1$-tuples of non-zero complex numbers by permutation. Note that the critical values are non-zero, since polynomials in $\widehat{X}_n$ have distinct roots. We then define
\begin{equation}
\widehat{V}_n:=\{(v,a_0)\in V_n\times \mathbb{C}: a_0\neq v_j \text{ for all }j\}.
\end{equation}
Then by Corollary 2.20 in \cite{bode:braided} the map $\theta_n$ that sends a polynomial in $\widehat{X}_n$ to its set of critical values $v\in V_n$ and its constant term $a_0$ is a covering map of degree $n^{n-1}$. 

Suppose now that $g_t$ is a loop in $\widehat{X}_n$ and let $v_t$ denote the loop of the corresponding critical values. That is, if we denote the projection map $\widehat{V}_n\to V_n$ by $\pi$, we have $v_t=\pi(\theta_n(g_t))$. Any homotopy of $v_t$ in $V_n$ is a braid isotopy that therefore extends to an ambient isotopy and gives a homotopy of $\theta_n(g_t)$ in $\widehat{V}_n$, which then lifts to a homotopy of $g_t$ in $\widehat{X}_n$. By definition of $\widehat{X}_n$ the braid type of the braid formed by the roots and the braid type of the saddle point braid do not change during this homotopy.

The relation between the braid formed by the roots and the braid formed by the critical values is important in the context of fibrations. We say that a geometric braid $B$ parametrised by $\bigcup_{j=1}^n(z_j(t),t)$ is \textbf{P-fibered} if the corresponding loop of polynomials $g_t(u)=\prod_{j=1}^n(u-z_j(t))$ defines an explicit fibration map via $\arg(g):(\mathbb{C}\times S^1)\backslash B\to S^1$, where $g:\mathbb{C}\times S^1\to\mathbb{C}$, $g(u,\rme^{\rmi t})=g_t(u)$. We say that a braid isotopy class is P-fibered if it has a P-fibered representative. Sometimes this condition is weakened to require a P-fibered representative in its conjugacy class in $\mathbb{B}_n$.

The property that $\arg(g)$ is a fibration map means that it does not have any critical points. By \cite{bode:satellite} $(u_*,t_*)\in (\mathbb{C}\times S^1)\backslash B$ is a critical point of $\arg(g)$ if and only if $\tfrac{\partial g}{\partial u}(u_*,t_*)=\tfrac{\partial \arg(g)}{\partial t}(u_*,t_*)=0$. This implies that all critical points must lie on the saddle point braid, i.e., there is some $j\in\{1,2,\ldots,n-1\}$ such that $c_j(t_*)=u_*$, where $c_j(t)$, $j=1,2,\ldots,n-1$, parametrise the saddle point braid. The property of being P-fibered is therefore also equivalent to $\tfrac{\partial\arg(v_j(t))}{\partial t}\neq 0$ for all $j$ and all $t$, which has the geometric interpretation that each critical value $v_j(t)$, interpreted as a curve $(v_j(t),t)$ in $\mathbb{C}\times [0,2\pi]$, has a fixed orientation (clockwise or counter-clockwise) with which it twists around $0\times [0,2\pi]\subset\mathbb{C}\times[0,2\pi]$.

We now define a particular family of P-fibered braids, the T-homogeneous braids, which feature in Theorems~\ref{thm:thomosaddle} and \ref{thm:thomosemi}. They are a generalisation of the homogeneous braids defined in Definition~\ref{def:homo}. 



Let $T$ be an embedded tree graph in the complex plane with $n$ vertices. Furthermore, every edge $e$ of $T$ should be associated with a sign $\varepsilon_e\in\{+,-\}$. See Figure~\ref{fig:tree}a) for an example. After a planar isotopy of $T$, we may assume that the vertices of $T$ are the $n$ roots of a complex polynomial in $\widehat{X}_n$. We consider loops in $\widehat{X}_n$ whose basepoint is given by the $n$-tuple of the vertices of $T$. For every edge $e$ there is a loop $g_e^{\varepsilon_e}$ in $\widehat{X}_n$ that exchanges the endpoints of the edge, where the sign of the twist matches $\varepsilon_e$ as in Figure~\ref{fig:tree}b). A braid on $n$ strands is called $T$-homogeneous if it has a representative of the form $g_t=\prod_{j=1}^\ell g_{e_j}^{\varepsilon_{e_j}}$, where for every edge $e$ there is an index $j\in\{1,2,\ldots,\ell\}$ with $e_j=e$. The order in which the twists occur or the number of times each twist occurs is not relevant for this definition. In general, we say that a braid $B$ is T-homogeneous if there is some embedded tree $T$ with choice of signs $\varepsilon_e$, such that $B$ is $T$-homogeneous with respect to $T$ and the chosen signs.

\begin{figure}
\centering
\labellist
\Large
\pinlabel a) at 20 800
\pinlabel b) at 1550 800 
\pinlabel $+$ at 1050 550
\pinlabel $-$ at 680 540
\pinlabel $-$ at 420 590
\pinlabel $-$ at 350 320
\pinlabel $+$ at 1900 730
\pinlabel $-$ at 1900 330
\endlabellist
\includegraphics[height=4.5cm]{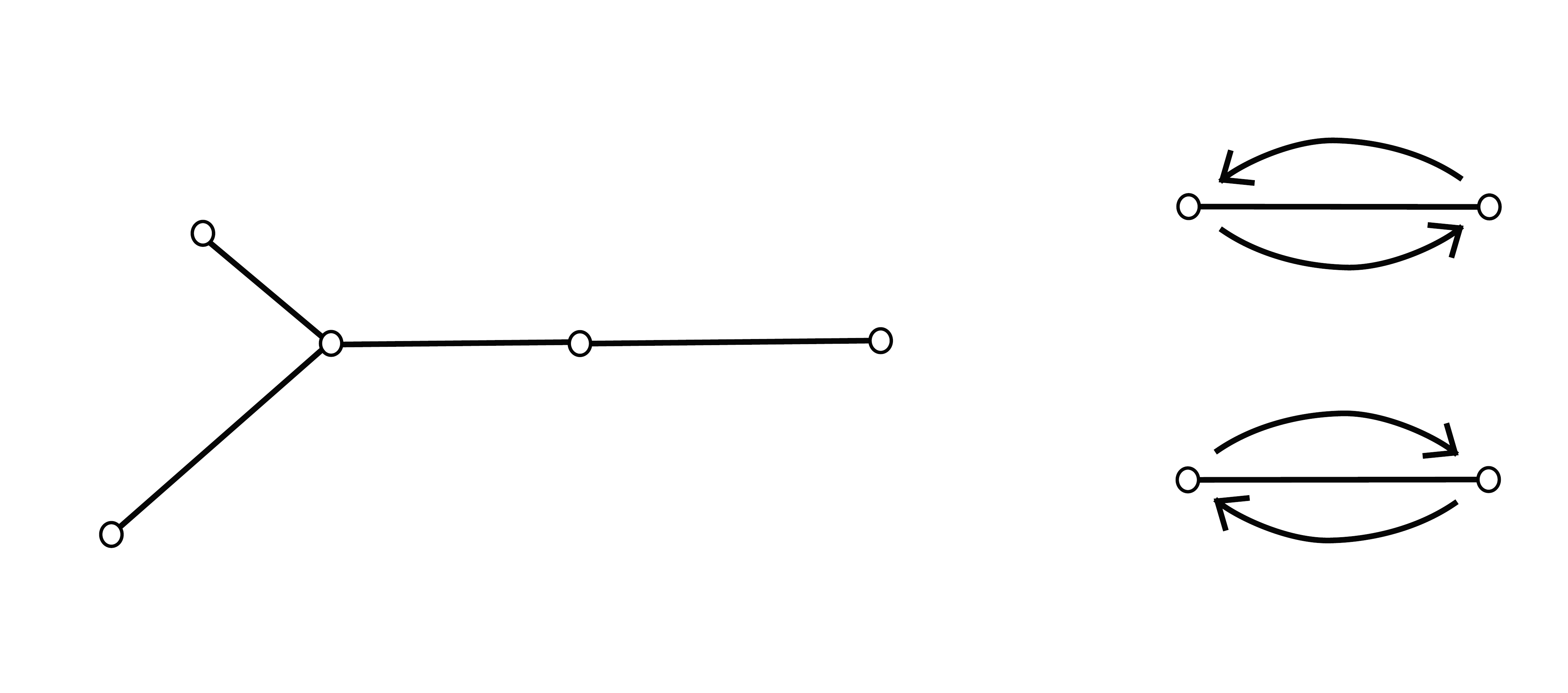}
\labellist
\Large 
\pinlabel c) at 20 1180
\endlabellist
\includegraphics[height=6cm]{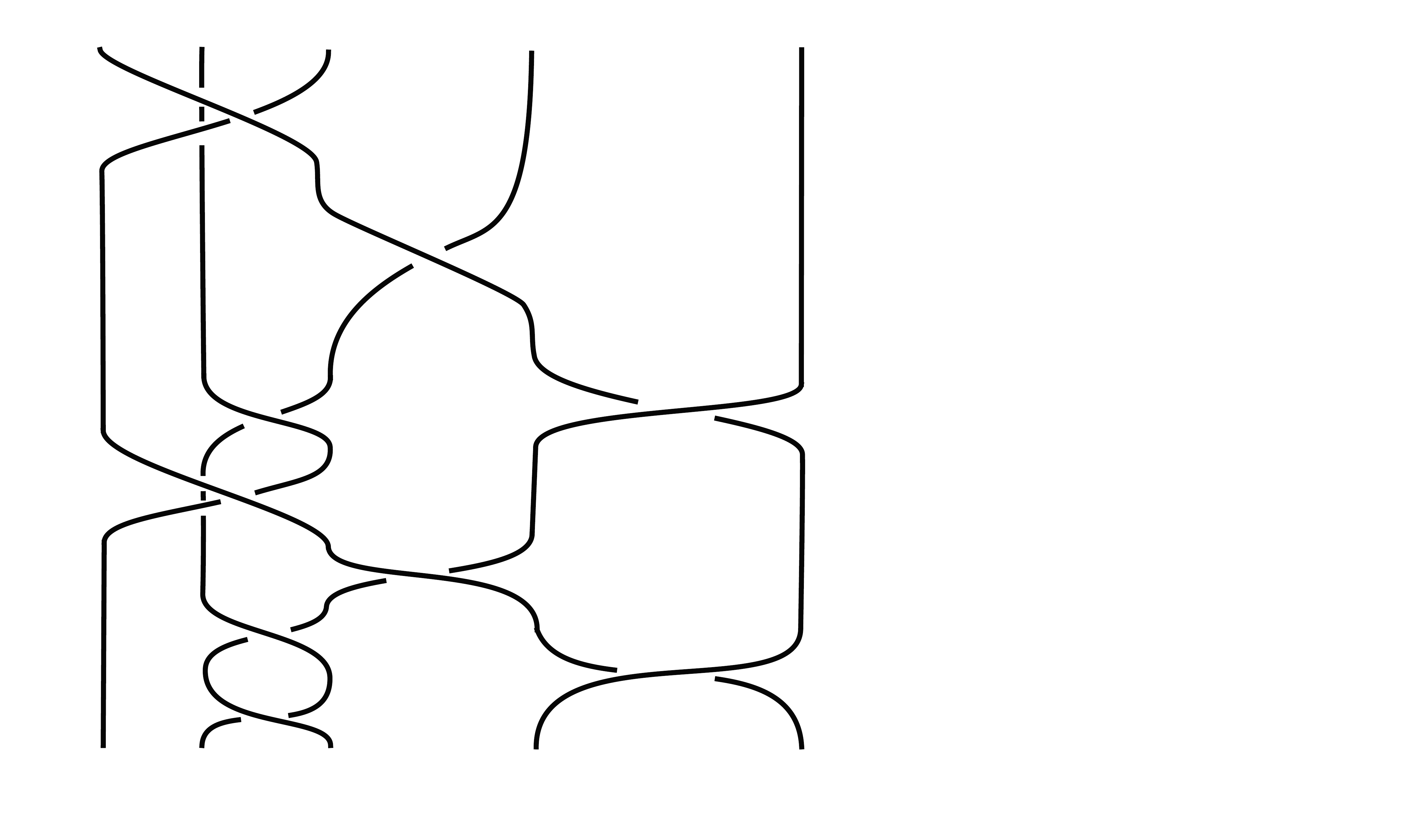}
\caption{a) An embedded tree $T$ in the complex plane. The signs $\varepsilon_e$ are drawn on each edge $e$. b) The loop $g_e$ exchanges the roots on the endpoints of $e$ as in the upper picture if $\varepsilon_e=+1$ and as in the lower picture if $\varepsilon_e=-1$. c) A $T$-homogeneous braid with $T$ as in Subfigure a). \label{fig:tree}}
\end{figure}

Every homogeneous braid $B$ is $T$-homogeneous, if $T$ is the line graph. Numbering the edges from left to right means that the sign $\varepsilon_j$ of the edge $j\in\{1,2,\ldots,n-1\}$ is the unique sign with which $\sigma_j$ appears in the homogeneous braid word $B$.

We now explain how embedded trees arise naturally in the study of complex polynomials. Let $p:\mathbb{C}\to\mathbb{C}$ be a complex polynomial in $\widehat{X}_n$ with critical values $v_k$, $k=1,2,\ldots,n-1$, such that $\arg(v_i)\neq\arg(v_j)$ for all $i\neq j$. Then the argument map $\arg(p)$ induces a singular foliation on $\mathbb{C}$ whose elliptic singularities are the roots of $p$ and whose hyperbolic singularities are the critical points of $p$. Every singular leaf of this foliation has the shape of a cross, consisting of one line connecting two roots of $p$ and one going to the circle boundary $\partial D=S^1$ of $\mathbb{C}\cong D$. The two lines meet in a critical point of $f$. 

The $n$th roots of unity divide $\partial D=S^1$ into $n$ arcs. We denote the arcs by $A_j$, $j=1,2,\ldots,n$, increasing the label as we go around the circle clockwise. The choice of arc that is labeled $A_1$ is arbitrary, but it should be the same for all polynomials $p$. Let $c_k$, $k=1,2,\ldots,n-1$ be the critical points of $p$ with $v_k=p(c_k)$. Since the roots of $p$ are distinct, $v_k\neq 0$ for all $k$. We may assume that $0<\arg(v_1)<\arg(v_2)<\ldots<\arg(v_{n-1})<2\pi$.

We may then associate to each critical point $c_k$ (or to each critical value $v_k=p(c_k)$) a transposition $\tau_k\in S_n$, where $S_n$ denotes the permutation group on $n$ elements. As mentioned above, $c_k$ lies on a unique singular leaf of the singular foliation of $D$ induced by $\arg(p)$. This singular leaf has two endpoints on $\partial D=S^1$ that lie on two different arcs $A_i$ and $A_j$, $i\neq j$. Then $\tau_k=(i\ j)$. It turns out that every such list of ordered transpositions $\tau_k$ satisfies $\prod_{k=1}^{n-1}\tau_k=(1\; 2\; \ldots\; n-1 \; n)$. The ordered list $\{\tau_k\}_{k\in\{1,2,\ldots,n-1\}}$ is called the \textbf{cactus} of the polynomial $p$, see \cite{cactus, bode:braided, bode:thomo} for more details. Note that labeling the arcs $A_j$ clockwise is the convention from \cite{bode:thomo}, which is the opposite of that of \cite{bode:braided}.

The definition of $\tau_k$ is illustrated in Figure~\ref{fig:cactus}a). In Figure~\ref{fig:cactus}b) we show the singular leaves of the singular foliation induced by $\arg(p)$, where $p$ is a polynomial of degree 4. The regular leaves can easily be filled in, since they always connect a root to the boundary circle. The cactus of the displayed polynomial is $\tau_1=(1\ 4)$, $\tau_2=(2\ 4)$, $\tau_3=(3\ 4)$.

\begin{figure}[H]
\centering
\labellist
\Large
\pinlabel a) at 100 900
\pinlabel b) at 1200 900
\pinlabel $A_i$ at 580 940
\pinlabel $A_j$ at 500 85
\pinlabel $i$ at 390 940
\pinlabel $j$ at 690 90
\pinlabel $A_1$ at 1280 800
\pinlabel $A_4$ at 1280 250
\pinlabel $A_3$ at 1950 250
\pinlabel $A_2$ at 1950 800
\pinlabel 2 at 1610 1000
\pinlabel 1 at 1140 510
\pinlabel 4 at 1615 50
\pinlabel 3 at 2080 520
\endlabellist
\includegraphics[height=5cm]{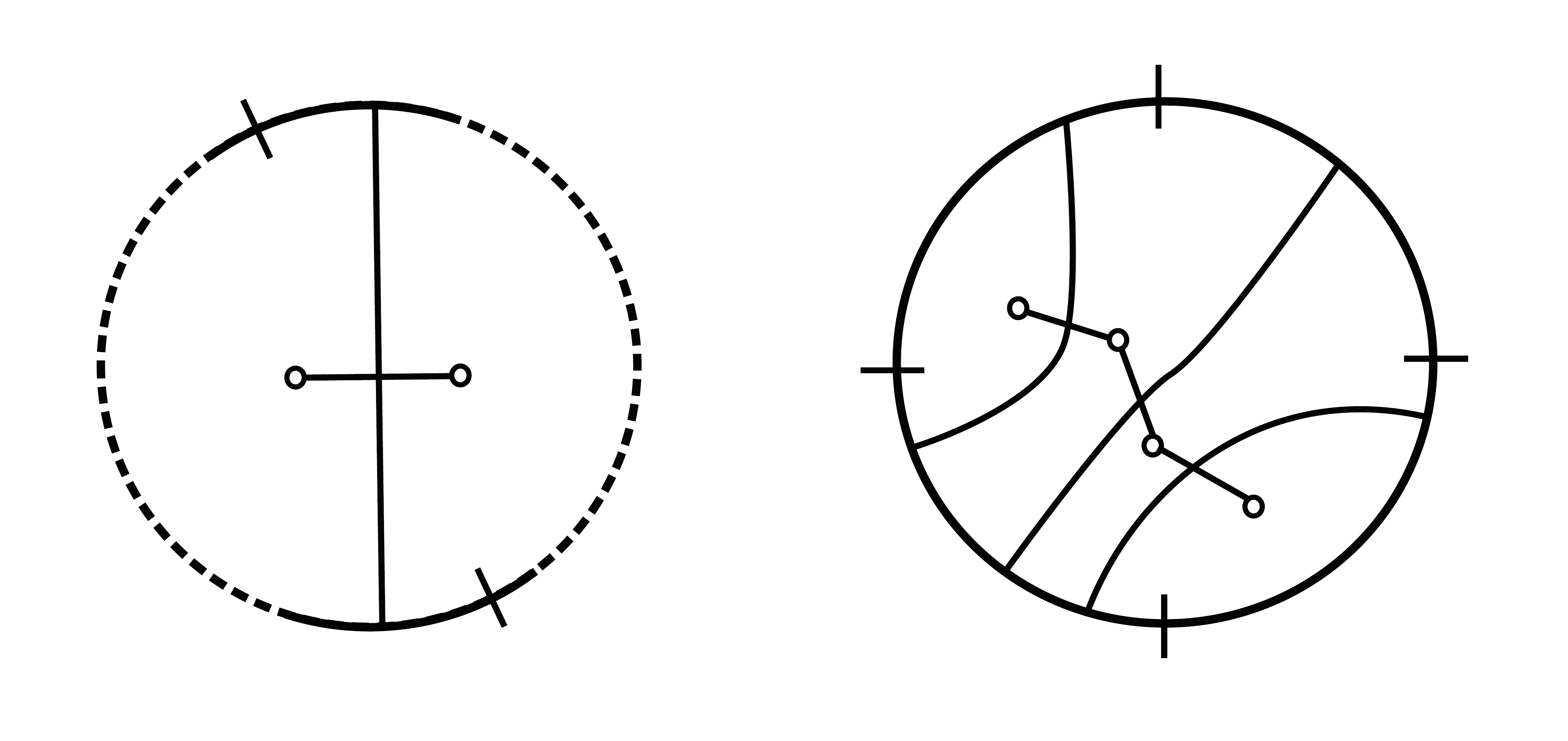}
\caption{Defining the cactus of a polynomial $p$ from the singular foliation induced by $\arg(p)$. a) A singular leaf with a critical point $c_k$ with $\tau_k=(i\ j)$. b) The singular leaves of the singular foliation induced by $\arg(p)$.\label{fig:cactus}}
\end{figure}

We may also associate to $p$ a planar graph embedded in $\mathbb{C}$, whose vertices are the roots of $p$ and two roots are connected by an edge if and only if they are part of the same singular leaf. This combinatorial structure is essentially equivalent to the cactus of the polynomial $p$. The resulting graph is always a tree and up to planar isotopy every embedded tree arises in this way.

There is a connection between embedded trees and subsets of the BKL-generators $a_{i,j}$, $i,j\in\{1,2,\ldots,n\}$, $i\neq j$, which were introduced by Birman, Ko and Lee in \cite{bkl}, and which also generate $\mathbb{B}_n$. The generator $a_{i,j}$ represents a positive half-twist between the $i$th strand and the $j$th strand such that in the projection the twist lies in front of all strands whose indices are between $i$ and $j$. See Figure~\ref{fig:BKL} for an example. In particular, the Artin generator $\sigma_i$ is given by $a_{i,i+1}$. BKL-generators (or band generators as they are also called) are often used to describe braided surfaces in $\mathbb{C}\times [0,2\pi]$, where each strand corresponds to a disk and each generator $a_{i,j}$ to a half-twisted band between the $i$th and the $j$th disk. In this way, a word in the BKL-generators describes a surface whose boundary is the corresponding braid represented by the same BKL-word.

\begin{figure}[h]
\centering
\includegraphics[height=5cm]{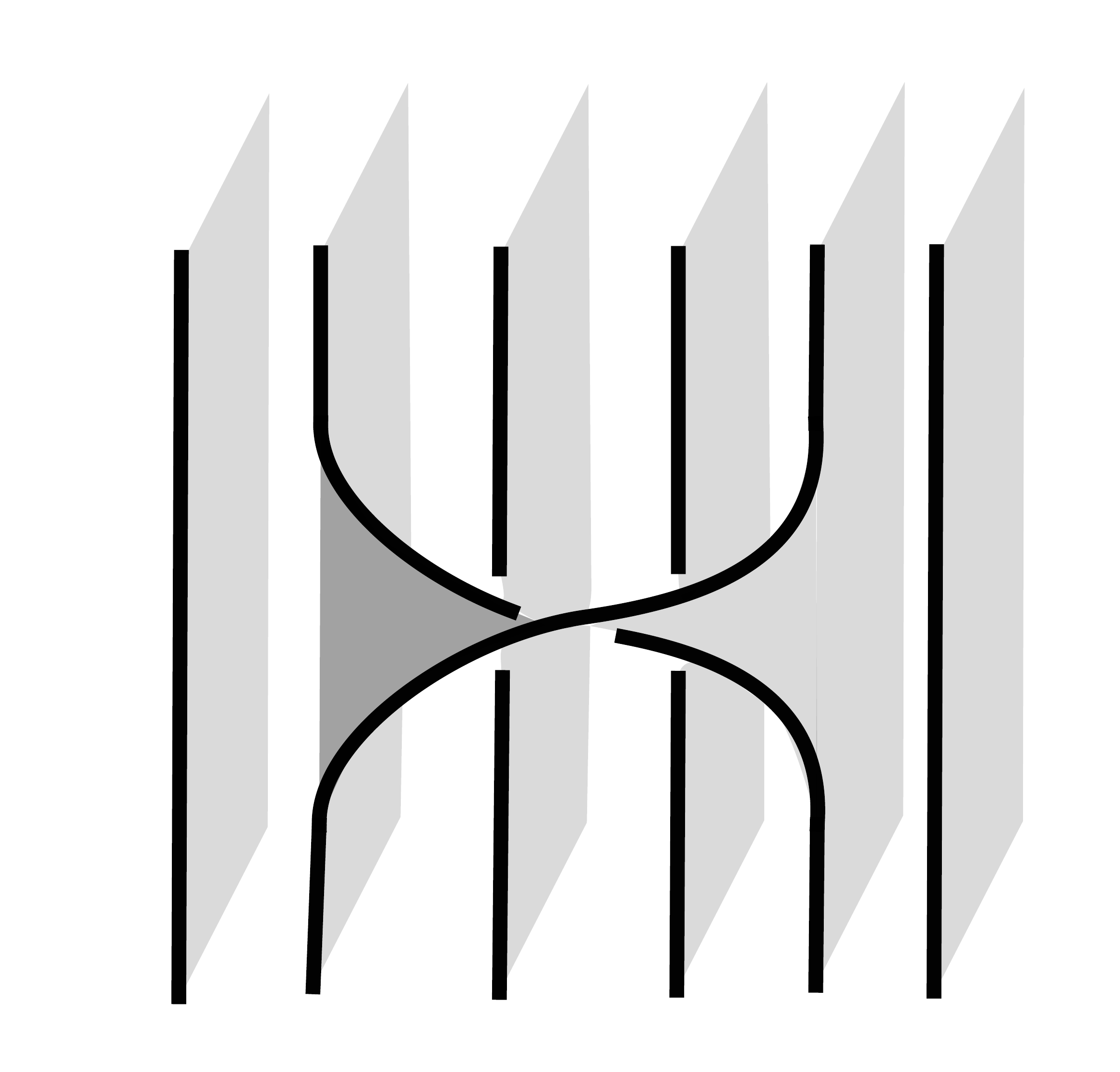}
\caption{The band generator $a_{2,5}$.\label{fig:BKL}}
\end{figure}

For every embedded tree $T$ the twists $g_{e_j}$ around edges $e_j$, $j=1,2,\ldots,n-1$, generate the braid group on $n$ strands, where $n$ is the number of vertices of $T$, see \cite{rudolph2}. After a planar isotopy of $T$ (which does not change the set of corresponding $T$-homogeneous braids) these generators can be realised as a subset of the band generators or BKL generators \cite{rudolph2, bkl}. We call the set $S_T$ of BKL-generators associated to a given embedded tree $T$ the $T$-generators. This means that we can define the set of $T$-homogeneous braids analogously to Definition~\ref{def:homo}. Naturally this definition is equivalent to the one explained above in terms of embedded trees and twists around edges.

\begin{definition}\label{def:thomo}
Let $S_T=\{s_1,s_2,\ldots,s_{n-1}\}$ be the set of $T$-generators of $\mathbb{B}_n$ for some embedded tree $T$. Let $B=\prod_{j=1}^\ell s_{i_j}^{\varepsilon_j}$ be a braid word in the $T$-generators. Then $B$ is a $T$-homogeneous braid word if
\begin{enumerate}
\item[i)] for every $k\in\{1,2,\ldots,n-1\}$ there is a $j\in\{1,2,\ldots,\ell\}$ with $i_j=k$,
\item[ii)] for every $j,j'\in\{1,2,\ldots,\ell-1\}$, $i_j=i_{j'}$ implies $\varepsilon_j=\varepsilon_{j'}$.
\end{enumerate}
We say that a braid $B$ is T-homogeneous if there exists an embedded tree $T$ with $T$-generators $S_T$ such that $B$ can be represented by a $T$-homogeneous braid word.
\end{definition} 

Clearly, Definition~\ref{def:thomo} reduces to Definition~\ref{def:homo} if $T$ is taken to be the line graph and $S_T$ is the set of Artin generators.

In \cite{bode:polynomial} we introduced the inhomogeneity $\beta(B)$ of a braid $B$, a natural number that measures how far away a given braid word $B$ in Artin generators is from being homogeneous. In particular, $\beta(B)=0$ if and only if $B$ is a homogeneous braid.

We may now define for every embedded tree $T$ the $T$-imhomogeneity $\beta_T(B)$ of a braid $B$ as follows. Denote the $T$-generators by $s_1,s_2,\ldots,s_{n-1}$. Then express $B$ as a word in these generators $B=\prod_{j=1}^\ell s_{i_j}^{\varepsilon_j}$. Then we count for each $i\in\{1,2,\ldots,n-1\}$ the number of sign changes of the generator $s_i$ as we traverse the braid word cyclically. We add all these numbers and add 2 for every generator that does not appear in the braid word at all, neither with a positive nor with a negative sign. This is expressed as
\begin{align}\label{def:inhomo}
\beta_T(B)=&\sum_{i=1}^{n-1}|\{j\in\{1,2,\ldots,\ell-1\}:\exists k\in\{1,2,\ldots,\ell-1\} \text{ s.t. }i_j=i_{j+k\text{ mod }\ell}=i,\nonumber\\
& i_{j+m\text{ mod }\ell}\neq i \text{ for all }m<k\text{ and }\varepsilon_j\varepsilon_{j+k\text{ mod }\ell}=-1\}|\nonumber\\
&+2|\{j\in\{1,2,\ldots,n-1\}:\text{ There is no }k\text{ s.t. }i_k=j\}|.
\end{align}
If $T$ is the line graph, the $T$-generators are the usual Artin generators and $\beta_T(B)=\beta(B)$. Note that by definition $\beta_T(B)=0$ if and only $B$ is a $T$-homogeneous braid. We should interpret $\beta_T(B)$ as a property of a braid word. Of course, we may take any braid, inflate its word artificially by inserting arbitrarily many copies of $s_js_j^{-1}$ and thereby make $\beta_T(B)$ arbitrarily large. If we wanted to insist on a topological invariant, we would thus have to take the minimum over all braid words representing the same braid $B$. For practical purposes, it is of course much simpler to consider $\beta_T(B)$ as a function from the set of words in $T$-generators (and their inverses) to the natural numbers.

In order to prove Theorem~\ref{thm:thomosaddle} we need to find a loop of polynomials $g_t$ in $\widehat{X}_n$ whose roots form a given T-homogeneous braid $B$, realised as a P-fibered geometric braid, and such that its saddle point braid is the trivial braid on $n-1$ strands. Loops of polynomials that realise $B$ as a P-fibered geometric braid have been constructed in \cite{rudolph2, bode:braided} and to some extent (for homogeneous braids) already in \cite{rudolph:complex, bode:thesis, survey}. However, these articles do not mention the saddle point braid in this context. We quickly review the main steps in this construction and explain why the corresponding saddle point braid is the trivial braid.

\begin{proposition}\label{prop:key}
Let $T$ be an embedded tree in $\mathbb{C}$ with $n$ vertices and let $B$ be a word in the $T$-generators. Then there is a loop $g_t$ in $\widehat{X}_n$ such that the roots of $g_t$ form the braid $B$, its saddle point braid is the trivial braid on $n-1$ strands and $\arg(g):(\mathbb{C}\times S^1)\backslash B\to S^1$, $\arg(g)(u,\rme^{\rmi t}):=\arg(g_t(u))$, has exactly $\beta_T(B)$ critical points.
\end{proposition}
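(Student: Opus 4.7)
The plan is to lift a judiciously chosen loop from the critical-value space $\widehat{V}_n$ to $\widehat{X}_n$ via the covering map $\theta_n$, in the spirit of the constructions of \cite{rudolph2, bode:braided}. I would begin by choosing a base polynomial $p_0\in\widehat{X}_n$ whose roots are the vertices of $T$ and whose cactus, in the sense recalled above, recovers the tree $T$. Its critical values $v_k^{(0)}$, $k=1,\ldots,n-1$, can then be indexed so that $v_k^{(0)}$ corresponds to the edge $e_k$, and adjusted to have pairwise distinct arguments and comparable moduli.

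To each $T$-generator $s_i$ and each sign $\varepsilon\in\{+,-\}$ I associate an elementary path $\gamma_i^{\varepsilon}$ in $\widehat{V}_n$ along which $v_i$ sweeps once around the origin (counter-clockwise if $\varepsilon=+$, clockwise if $\varepsilon=-$), while each other $v_j$ drifts along a short arc whose argument varies monotonically and which does not encircle $0$. The lift of $\gamma_i^{\varepsilon}$ to $\widehat{X}_n$ starting at $p_0$ realises the twist $s_i^{\varepsilon}$ on the roots placed at the vertices of $T$. Concatenating these elementary paths according to $B=\prod_{j=1}^{\ell} s_{i_j}^{\varepsilon_j}$ produces a path $\Gamma$ in $\widehat{V}_n$ which closes up to a loop, and whose lift is a loop $g_t\in\widehat{X}_n$ whose root braid is $B$. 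Since during each elementary segment only the critical point $c_i(t)$ associated with the moving $v_i$ travels appreciably, tracing a small closed path back to its initial position, while each other $c_k$ remains in an arbitrarily small neighbourhood of its initial position, the strands of the saddle point braid can be taken $C^0$-close to $n-1$ disjoint vertical arcs; hence the saddle point braid is the trivial braid on $n-1$ strands.

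For the critical-point count, the criterion recalled from \cite{bode:satellite} gives that $(u_*,t_*)$ is a critical point of $\arg(g)$ if and only if $u_*=c_k(t_*)$ for some $k$ and $\partial_t\arg(v_k(t_*))=0$. I would choreograph the drift motions so that, for every $k$ for which $s_k$ occurs in $B$, $\arg(v_k(t))$ rotates monotonically in the direction prescribed by $\varepsilon_j$ during each occurrence of $s_k$ at position $j$ and continues monotonically in that same direction through each following drift. Then the extrema of $\arg(v_k)$ occur exactly at the cyclic sign changes of $s_k$ in the braid word, contributing one critical point of $\arg(g)$ per sign change. For each $k$ such that $s_k$ does not appear in $B$, I would insert into the loop a small excursion of $v_k$ along a tiny loop that does not encircle the origin, so that the lift remains a loop in $\widehat{X}_n$ but no new twist is created; this excursion contributes exactly one maximum and one minimum of $\arg(v_k)$ and hence $2$ critical points. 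Summing over $k$ reproduces Eq.~\eqref{def:inhomo} and gives exactly $\beta_T(B)$ critical points in total. The principal technical obstacle is designing the drift motions of the inactive critical values so that all these monotonicity constraints on the various $\arg(v_k)$ are simultaneously compatible, the resulting extrema are non-degenerate, and the whole path remains inside $\widehat{V}_n$; this reduces to a careful choice of perturbations in the vein of the explicit polynomial deformations of \cite{rudolph2, bode:braided}.
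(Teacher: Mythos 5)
Your overall strategy is the same as the paper's: realise each $T$-generator by an explicit loop in the critical-value space $\widehat{V}_n$, lift through the degree-$n^{n-1}$ cover $\theta_n$, and read the critical points of $\arg(g)$ off the turning points of $\arg(v_k(t))$, counting sign changes for generators that occur and adding $2$ for each generator that does not. The critical-point count and the treatment of unused generators match the paper's argument.

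The step that is not adequately justified is the claim that the saddle point braid is trivial. You assert that, as $v_i$ sweeps around the origin during the elementary segment for $s_i^{\varepsilon}$, the associated critical point $c_i(t)$ ``travels appreciably, tracing a small closed path back to its initial position,'' and you conclude that all critical points stay in disjoint small neighbourhoods, giving the trivial braid. This is precisely the assertion that needs proof, and nothing in your construction guarantees it: the lift of a loop of critical values through $\theta_n$ controls the loop of polynomials, not the size of the excursion of an individual critical point, and a priori a critical value encircling $0$ could drive its critical point on a large path that links another critical point's trajectory. The paper avoids this issue entirely by realising each elementary twist as $t\mapsto p(u)-\gamma_j(t)$, where $\gamma_j$ is a loop in $\mathbb{C}$ based at $0$ encircling $v_j$. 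Since only the constant term varies with $t$, one has $\tfrac{\partial}{\partial u}\bigl(p(u)-\gamma_j(t)\bigr)=p'(u)$ for all $t$, so the critical points are literally constant and the saddle point braid is trivially the trivial braid. Only afterwards is the loop of critical values deformed in $\widehat{V}_n$ to the monotone form you describe; that deformation lifts to an isotopy in $\widehat{X}_n$ and therefore cannot change the braid type of the saddle point braid. You should either adopt that explicit form for your elementary paths and then observe that the derivative with respect to $u$ is $t$-independent, or else show that your elementary paths in $\widehat{V}_n$ are homotopic rel endpoints to the ones arising from $p-\gamma_j(t)$, so that the lifted saddle point braids are isotopic. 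As written, the triviality of the saddle point braid is assumed rather than proved.
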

\begin{proof}
Let $z_1,z_2,\ldots,z_n$ be the vertices of $T$. After a planar isotopy of $T$ we may assume that $T$ is exactly the embedded tree associated to the polynomial $p(u)=\prod_{j=1}^n(u-z_j(t))$ (see for example Theorem 5.3 in \cite{bode:braided}). We now study the loop of polynomials in $\widehat{X}_n$ whose roots form $B$ and that has $p$ as a basepoint.

Each $T$-generator $s_j$ corresponds to a twist $g_{e_j}$ along an edge $e_j$ of $T$. Thus $B$ is a concatenation of twists $g_{e_j}$ along edges $e_j$ of $T$, say $\prod_{k=1}^\ell g_{e_{j_k}}^{\varepsilon_k}$, where the product refers to concatenation of loops in $\widehat{X}_n$. Each twist $g_{e_j}$ corresponds to a very particular motion of the critical values and the constant term in $\widehat{V}_n$. 

Let $v_1,v_2,\ldots,v_{n-1}$ be the critical values of $p$ and let $a_0$ be its constant term. After a small deformation of the embedded graph, we may assume that $\arg(v_i)\neq\arg(v_j)$ if $i\neq j$. After a translation of $T$ in $\mathbb{C}$, which does not affect the critical values or the graph structure, we may further assume that one of the $z_j$ is equal to 0 and so $a_0=0\neq v_j$ for all $j$, so in particular, $p\in\widehat{X}_n$. There is now a one-to-one correspondence between edges of $T$ and critical values of $p$. The twists $g_{e_j}(u)$ can be explicitly realised as loops in $\widehat{X}_n$ via $p(u)-\gamma_j(t)$, where $\gamma_j(t)$ is a loop in $\mathbb{C}$ with basepoint at the origin and the property that it encircles the critical value $v_j$ counterclockwise in an ellipse that does not contain any other critical values $v_i$ with $i\neq j$ as shown in Figure~\ref{fig:braidiso}a). We denote the inverse loop of $\gamma_j(t)$ that encircles $v_j$ in a clockwise direction by $\gamma_j(t)^{-1}$.

Thus we have realised $B$ as the roots of a loop in $\widehat{X}_n$ that is given by $\prod_{k=1}^\ell (p-\gamma_{j_k}(t)^{\varepsilon_k})$, where again the product refers to concatenation of loops in $\widehat{X}_n$. Since the constant term is the only term that depends on $t$, the saddle point braid of the corresponding loop of polynomials is the trivial braid on $n-1$ strands.

The motion of the critical values of a loop $g_{e_j}$ is shown in Figure~\ref{fig:braidiso}b). As in Figure~\ref{fig:braidiso}c) we may deform the loop of critical values and constant term $-\gamma_j(t)$ such that $v_i(t)$ does not depend on $t$ if $i\neq j$ and $v_j$ encircles $\{0\}\times[0,2\pi]$ counterclockwise in an ellipse. Thus the loop of critical values and constant term $\theta_n(\prod_{k=1}^\ell (p-\gamma_{j_k}(t)^{\varepsilon_k}))$ can be deformed in $\widehat{V}_n$ so that in each interval $t\in\left[\tfrac{2\pi (k-1)}{\ell},\tfrac{2\pi k}{\ell}\right]$ the critical values $v_i(t)$ with $i\neq j_k$ do not depend on $t$, while $v_{j_k}$ moves on an ellipse around the origin, going counterclockwise if $\varepsilon_k=1$ and clockwise if $\varepsilon_k=-1$.

We may now deform the loop of critical values slightly to make $\partial_t\arg(v_i(t))$ non-zero for all $t\in\left[\tfrac{2\pi (k-1)}{\ell},\tfrac{2\pi k}{\ell}\right]$ and all $i\neq j_k$. All of these deformations are homotopies in $\widehat{V}_n$ and lift to a homotopy of $\prod_{k=1}^\ell (p-\gamma_{j_k}(t)^{\varepsilon_k})$ in $\widehat{X}_n$. The resulting loop $g_t$ in $\widehat{X}_n$ therefore still has the same braid of roots and the saddle point braid as $\prod_{k=1}^\ell (p-\gamma_{j_k}(t)^{\varepsilon_k})$, that is, $B$ and the trivial braid on $n-1$ strands, respectively.

The corresponding braid of critical values consists of strands that are motions of points on ellipses. We know that critical points of $\arg(g)$ are exactly points where a critical value $v_i(t)$ changes its direction with which it moves on its ellipse, clockwise or counterclockwise. Since the direction of motion on the ellipse is given by the signs $\varepsilon_k$ with $j_k=i$, this number is exactly  
\begin{align}
|\{j\in\{1,2,\ldots,\ell-1\}:&\exists k\in\{1,2,\ldots,\ell-1\} \text{ s.t. }i_j=i_{j+k\text{ mod }\ell}=i,\nonumber\\
& i_{j+m\text{ mod }\ell}\neq i \text{ for all }m<k\text{ and }\varepsilon_j\varepsilon_{j+k\text{ mod }\ell}=-1\}|.
\end{align}

\begin{figure}[H]
\centering
\labellist
\Large
\pinlabel a) at 40 800
\pinlabel b) at 1100 800
\pinlabel c) at 2200 800
\endlabellist
\includegraphics[height=4cm]{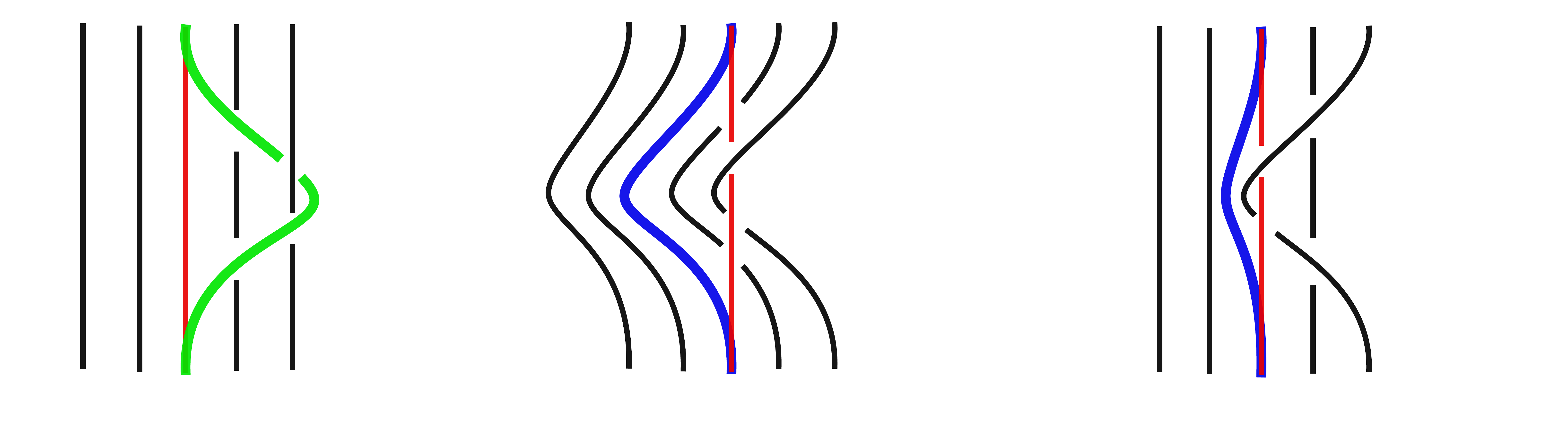}
\includegraphics[height=4.5cm]{inner_loop21}
\caption{a) The critical values $v_j$, $j=1,2,\ldots,n-1$ (in black), the origin $0\in\mathbb{C}$ (in red) and $\gamma_j(t)$ (in thick green), both as curves in $\mathbb{C}\times S^1$ and as motions in $\mathbb{C}$. b) The curves $v_j-\gamma_j(t)$, $j=1,2,\ldots,n-1$ (in black), the origin $0\in\mathbb{C}$ (in red) and the constant term $-\gamma_j(t)$ (in thick blue), both as curves in $\mathbb{C}\times S^1$ and as motions in $\mathbb{C}$. c) A deformation of the curves from Subfigure b) so that all but one critical value become stationary. \label{fig:braidiso}}
\end{figure}

If there is no $k$ with $j_k=i$, i.e., the $T$-generator corresponding to $g_{e_i}$ does not appear in the braid word with any sign, then $v_i(t)$ can be taken to be constant. In order to obtain a Morse function, we deform this stationary strand such that $\arg(v_i(t))$ has exactly two critical points. Thus the number of critical points of $\arg(g)$ is exactly $\beta_T(B)$.
\end{proof}

\begin{proof}[Proof of Theorem~\ref{thm:thomosaddle}]
Let $B$ be a T-homogeneous braid for some embedded tree $T$ with chosen signs. By definition $\beta_T(B)=0$ and so the theorem follows from Proposition~\ref{prop:key}.
\end{proof}

Fibered links in $S^3$ are exactly the bindings of open book decompositions of $S^3$. We say that an open book in $S^3$ is a braided open book if its binding is the closure of a P-fibered braid \cite{bode:braided}. The braid axis can then be thought of as a braid axis for the entire open book, not only for the binding, that is, all fiber surfaces (the pages of the open book) are positioned in a very natural way relative to this braid axis. They are all braided surfaces in the sense of Rudolph \cite{rudolph2}.

An equivalent definition of braided open books in $S^3$ involves simple branched covers $S^3\to S^3$. Montesinos and Morton conjecture that for every fibered link $L$ in $S^3$ there is a simple branched cover $\Pi:S^3\to S^3$ of degree $n$, branched over a link $L_{branch}$ such that $L=\Pi^{-1}(\alpha)$ is the preimage of some braid axis $\alpha$ of $L_{branch}$ and $L_{branch}$ is the unlink on $n-1$ components \cite{morton}.

We showed in \cite{bode:braided} that we can construct a simple branched cover $\Pi:S^3\to S^3$ from any loop of polynomials $g_t$, whose roots form a P-fibered geometric braid. The resulting branch link $L_{branch}$ is exactly the closure of the braid that is formed by the critical values of $g_t$. The proof of Proposition~\ref{prop:key} shows that for closures of T-homogeneous braids the conjecture by Montesinos and Morton is true, since the braid of critical values is (exactly like the saddle point braid) the trivial braid on $n-1$ strands.

In general it is not true that the saddle point braid and the braid of critical values are isotopic, but they must always have the same permutation of strands. So if the saddle point braid is the trivial braid on $n-1$ strands, then the braid of critical values is a pure braid. That is, every critical value ends at $t=2\pi$ in the same position where it starts at $t=0$. Constructing $\Pi$ from such a loop of polynomials $g_t$ gives a branch link $L_{branch}$ with $n-1$ components, but not necessarily the unlink.

Braided open books have also been studied by Rudolph \cite{rudolph2}, who calls the saddle point braid the ``derived bibraid''. The name is justified, since the closure of the saddle point braid, as a link in $S^3$, is transverse to all pages of the open book whose binding is the braid axis and also transverse to all pages of the given braided open book. We may therefore choose orientations for the components of the derived bibraid that turn it into a braid relative to the braid axis and another (in general different) choice of orientation turns it into a generalised braid relative to the fibered link $L$. Singularity theorists might also be interested in Rudolph's calculation of the Milnor number of a fibered link that is the binding of a braided open book in $S^3$ in terms of properties of the derived bibraid \cite{rudolph2}.

The Morse-Novikov number $\MN(L)$ of a link $L$ is a natural number that measures how far a given link is from being fibered. In particular, $\MN(L)=0$ if and only if $L$ is fibered. In \cite{bode:polynomial} we proved the upper bound $\MN(L)\leq \beta(B)$ for all links $L$ and all braids $B$ that close to $L$. We use the discussion above to improve this bound.

The Morse-Novikov number $\MN(L)$ is defined to be the minimal number of critical points of any circle-valued Morse map on $S^3\backslash L$ that displays the usual behaviour of an open book in a tubular neighbourhood of $L$, that is, locally it is given by $\phi:S^1\times(D\backslash\{0\})\to S^1$, $\phi(x,z)=\arg(z)$. We will refer to any such map $\phi$ as a pseudo-fibration. Upper and lower bounds of the Morse-Novikov number in terms of other link invariants have been found in \cite{mikami}, \cite{rudolphMN} and \cite{pajitnov}, but we are not aware of any explicit formula or algorithm that computes it.

\begin{corollary}\label{prop:MNL}
Let $B$ be a braid on $n$ strands whose closure is the link $L$. Then $\MN(L)\leq \min_T \beta_T(B)$, where the minimum is taken over all embedded trees $T$ with $n$ vertices. 
\end{corollary}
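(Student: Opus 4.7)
The plan is to deduce the corollary directly from Proposition~\ref{prop:key}, by extending the resulting circle-valued Morse function across the braid axis so that it becomes a pseudo-fibration of $L$ in the sense required by the definition of $\MN(L)$.

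First I would fix an arbitrary embedded tree $T$ with $n$ vertices. Since the $T$-generators generate $\mathbb{B}_n$, as recalled just before Definition~\ref{def:thomo}, the braid $B$ admits a representative word in the $T$-generators whose $T$-inhomogeneity equals $\beta_T(B)$. Applying Proposition~\ref{prop:key} to this word produces a loop $g_t$ in $\widehat{X}_n$ whose roots trace out $B$ and for which $\arg(g):(\mathbb{C}\times S^1)\setminus B\to S^1$, $(u,\rme^{\rmi t})\mapsto \arg(g_t(u))$, is a circle-valued Morse function with exactly $\beta_T(B)$ critical points. Near each root $z_j(t)$ the factorisation $g_t(u)=(u-z_j(t))h_t(u)$ with $h_t(z_j(t))\neq 0$ shows that $\arg(g)$ already has the $\arg(z)$ normal form required of a pseudo-fibration in a tubular neighbourhood of $B$, up to an additive constant that may be absorbed by a rotation of the $S^1$ target.

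The next step is to promote $\arg(g)$ to a map on $S^3\setminus L$. Following the strategy of \cite{bode:polynomial}, I would identify $\mathbb{C}\times S^1$ with a standard untwisted tubular neighbourhood $V$ of an unknot $\alpha\subset S^3$ so that $B$ closes to $L$ inside $V$. On the complementary solid torus $S^3\setminus\mathrm{int}(V)$, the monic identity $g_t(u)=u^n+O(u^{n-1})$ yields $\arg(g_t(u))=n\arg(u)+o(1)$ as $|u|\to\infty$, matching the monodromy of the $n$-fold cover around $\alpha$. One can therefore interpolate between $\arg(g_t(u))$ on $\partial V$ and $n\arg(u)$ on $\alpha$ via a smooth cut-off depending only on $|u|$, extending the map to $S^3\setminus L$ without introducing new critical points. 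The resulting map is a pseudo-fibration with the same $\beta_T(B)$ critical points as the original one, so $\MN(L)\leq \beta_T(B)$. Taking the minimum over all embedded trees $T$ with $n$ vertices then gives the stated bound.

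The only non-routine step is checking that the extension across the axis introduces no new critical points; however this is precisely the extension already performed in \cite{bode:polynomial} to prove $\MN(L)\leq \beta(B)$ for the line tree, and because Proposition~\ref{prop:key} guarantees that $g_t$ is a genuine loop of monic polynomials of fixed degree $n$, the asymptotic behaviour at infinity, and hence the extension argument, is insensitive to the choice of $T$.
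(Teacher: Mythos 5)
Your proposal is correct and follows essentially the same route as the paper: apply Proposition~\ref{prop:key} for each tree $T$ to get a loop $g_t$ with exactly $\beta_T(B)$ argument-critical points, then extend $\arg(g)$ from the braided solid torus across the braid axis to a circle-valued Morse map on $S^3\setminus L$ using the monic asymptotics, and minimize over $T$. The paper packages the extension step by taking $\varphi$ to be the argument of a semiholomorphic polynomial built from $g_t$ (citing the construction of \cite{bode:lemniscate}), whereas you perform the interpolation more directly; these are two descriptions of the same extension mechanism, both resting on $\arg(g_t(u))\to n\arg(u)$ as $|u|\to\infty$, so the differences are cosmetic rather than substantive.
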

\begin{proof}
From Proposition~\ref{prop:key} we have for every embedded tree $T$ with $n$ vertices a loop $g_t$ in $\widehat{X}_n$ whose roots form $B$ and with exactly $\beta_T(B)$ critical points of $\arg(g)$. As in \cite{bode:lemniscate} we can construct from a Morse function $\varphi$ on $S^3\backslash L$, where $L$ is the closure of $B$ with the same number of critical points. In fact, $\varphi$ may be taken to be the argument of a semiholomorphic polynomial map $f:\mathbb{C}^2\to\mathbb{C}$ restricted to $S^3\backslash L$ on the 3-sphere of unit radius. (The polynomial $f$ does not necessarily have a (weakly) isolated singularity and the $S^3_{\rho}\cap V_f$ might be different from $L$ for small radii $\rho$.) Thus $\varphi$ satisfies the desired property on a tubular neighbourhood of $L$ and we have $\MN(L)\leq \beta_T(B)$. Since this holds for any embedded tree $T$, the result follows.
\end{proof}

Note that $\beta_T(B)$ only depends on the set of $T$-generators, not on the embedded tree $T$ per se. Thus the expression $\min_T\beta_T(B)$ refers to the minimum of a finite set of numbers.

\begin{example}
Consider again the example from Figure~\ref{fig:tree}c). We already know that it is $T$-homogeneous for the embedded tree $T$ in Figure~\ref{fig:tree}a). Therefore, $\beta_T(B)=0$ and $\MN(L)=0$. However, expressing the same braid in Artin generators gives $\sigma_2^{-1}\sigma_4\sigma_2^{-1}\sigma_3^{-1}\sigma_1\sigma_2^{-1}\sigma_1^{-1}\sigma_2^{-1}\sigma_4\sigma_3^{-1}\sigma_1\sigma_2^{-1}\sigma_1^{-1}$. We now calculate $\beta_T(B)=\beta(B)$ for the line graph $T$. Every generator appears with a positive sign or a negative sign, so the last sum in Eq.~\eqref{def:inhomo} does not contribute. Furthermore, the generators $\sigma_2$, $\sigma_3$ and $\sigma_4$ all come with a fixed sign. All instances of $\sigma_2$ and $\sigma_3$ are negative, while all instances of $\sigma_4$ are positive. So these strands do not contribute to the count in $\beta_T(B)$. However, the sequence of signs of $\sigma_1$ as we traverse the braid word reads $\{+,-,+,-\}$. So there are three sign changes plus one, since the first entry of this list is different from the last one. Thus the bound from \cite{bode:polynomial} would have given $\MN(L)=0\leq 4=\beta(B)$, while our new improved bound in Proposition~\ref{prop:MNL} gives equality $\MN(L)=0=\min_T \beta_T(B)$.
\end{example}

\begin{proof}[Proof of Theorem~\ref{thm:saddle}]
Consider a parametrisation of the braid $B'$, say
\begin{equation}
\bigcup_{t\in[0,2\pi]}\bigcup_{j=1}^{n-1}(c_j(t),t)\subset\mathbb{C}\times[0,2\pi],
\end{equation} 
with appropriate functions $c_j:[0,2\pi]\to\mathbb{C}$. Then $h_t(u):=n\int_{0}^u\prod_{j=1}^{n-1} (w-c_j(t))\rmd w$ is a loop in the space of monic polynomials of degree $n$ whose critical points form the braid $B'$ in exactly the given parametrisation. After a small deformation of $h_t$ we may assume that its roots are also distinct. The fact that its critical points form $B'$ does not change with this small deformation. 

Since the roots of $h_t$ are distinct for all $t\in[0,2\pi]$, they form a braid on $n$ strands. However, at this stage we do not know what this braid is. We will call it $A$. Now apply the construction outlined in the proof to Proposition~\ref{prop:key} to the braid $A':=A^{-1}B$ and basepoint $g_0:=h_0$ to obtain a loop $g_t$ in $\widehat{X}_n$ whose roots form the braid $A'$ and whose saddle point braid is the trivial braid $e$ on $n-1$ strands. Then the composition of $h_t$ and $g_t$ is a loop in $\widehat{X}_n$ whose roots form the braid $AA'=B$ and whose critical points form the braid $B'e=B'$.
\end{proof}

\section{Visualisations of (pseudo)fibrations}\label{sec:visual}

There are several characterisations of the set of fibered links in terms of link invariants \cite{stallings2, gabai, ni}. However, even if the proof that a certain link is fibered offers a description of the corresponding fiber surface, it is often difficult to visualise how exactly these fibers fill the link complement. For closures of P-fibered braids the whole fibration is in a nice position relative to the unbook in $S^3$ or if we consider the corresponding braid in $\mathbb{C}\times [0,2\pi]$, we might say the fibration is in a nice position relative to the height function $(u,t)\mapsto t$ as in \cite{bode:braided}. This allows for nice visualisations of the fibrations of complements of closures of P-fibered braids.

We present three different methods to visualise such fibrations. In the case of non-fibered links similar tools may be used to visualise pseudo-fibration maps. Before we describe these visualisations we would like to mention that the structure of a link and surfaces that foliate its complement also appear in various physical systems, so that visualisations of this form are of interest beyond pure mathematics. 

Running a constant electric current through a closed wire in the shape of a given knot $K$ induces a magnetic field $B:\mathbb{R}^3\backslash K\to\mathbb{R}^3$, which can be calculated explicitly (numerically) from the Biot-Savart law. It has a circle-valued magnetostatic potential function $\varphi:\mathbb{R}^3\backslash K\to S^1$ with $\nabla \varphi=B$. More details can be found in \cite{gareth}. Under a small assumption on its behaviour near the point at infinity the map $\varphi$ can be completed to a map on $S^3\backslash K$ with the desired behaviour on a tubular neighbourhood of $K$, so that $\varphi$ has at least $\MN(K)$ critical points. The critical points of $\varphi$ are precisely the zeros of the magnetic field $B$. The pages of an open book thus have a physical interpretation as the level sets of a magnetostatic potential function. Note however that both $B$ and $\varphi$ depend heavily on the geometry of $K$. Different, but isotopic, embeddings can lead to very different fields and potentials. In particular, it is not known if it is possible to arrange $K$ in $\mathbb{R}^3$ such that the resulting $\varphi$ has exactly $\MN(K)$ points.

Other interpretations of fibrations appear in the context of liquid crystals, where the knot is a defect line and the fiber surfaces correspond to layers of material along which molecules arrange themselves \cite{randy}. In this context, critical points of a map $\varphi:\mathbb{R}^3\backslash K\to S^1$ correspond to point defects of the liquid crystal configuration.

In singularity theory, the fibrations play an important role via Milnor fibrations, given by the argument of a polynomial map with isolated singularity. The visualisation of fibrations presented in this section can therefore be interpreted as visualisations of Milnor fibrations on the 3-sphere $S^3_{\varepsilon}\backslash V_f$, where the link of the singularity is presented as a braid.

\subsection{Visualisations from explicit functions}

Suppose that $B$ is a P-fibered geometric braid realised via the roots of $g_t$, a loop in $\widehat{X}_n$. If we know the maps $g_t$ and thus $g(u,\rme^{\rmi t}):=g_t(u)$, we can simply plot the level sets of $\arg(g)$ to obtain a nice visualisation.

\begin{figure}[H]
\centering
\labellist
\Large
\pinlabel a) at 40 360
\endlabellist
\includegraphics[height=4cm]{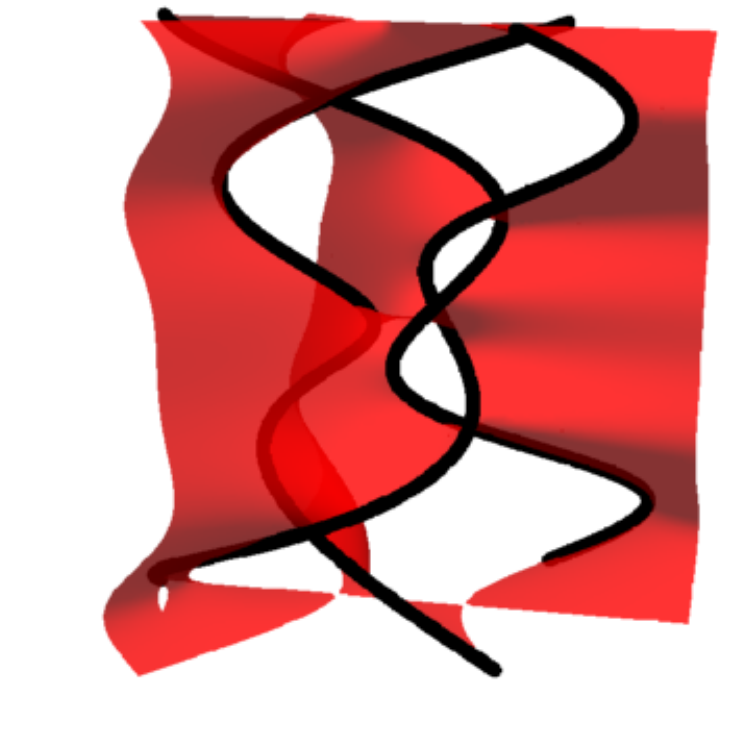}
\labellist
\Large
\pinlabel b) at 40 360
\endlabellist
\includegraphics[height=4cm]{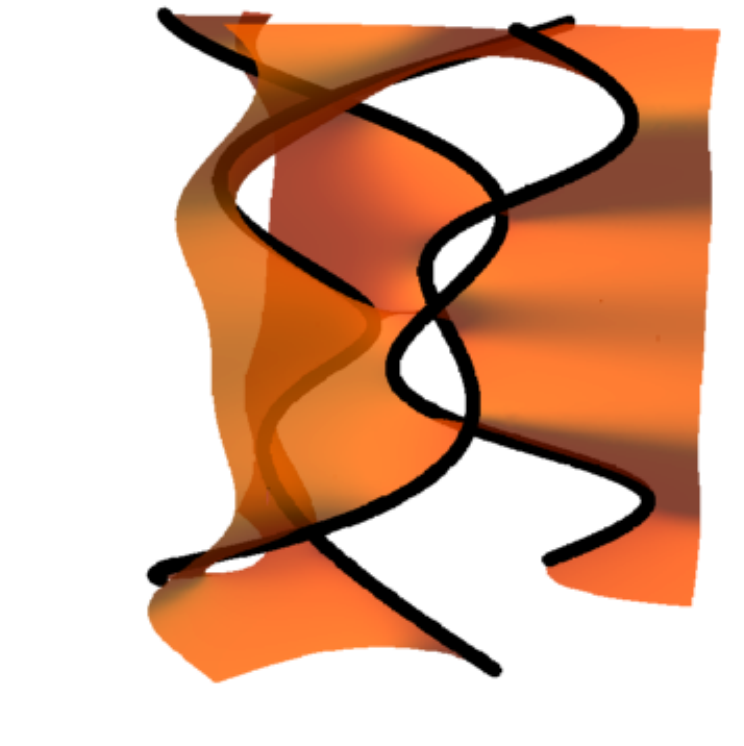}
\labellist
\Large
\pinlabel c) at 40 360
\endlabellist
\includegraphics[height=4cm]{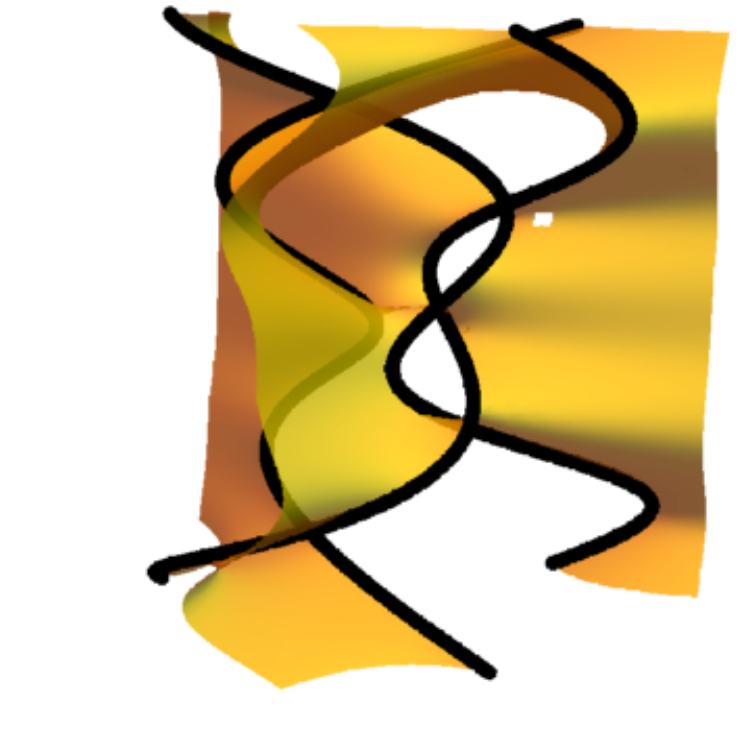}
\labellist
\Large
\pinlabel d) at 40 360
\endlabellist
\includegraphics[height=4cm]{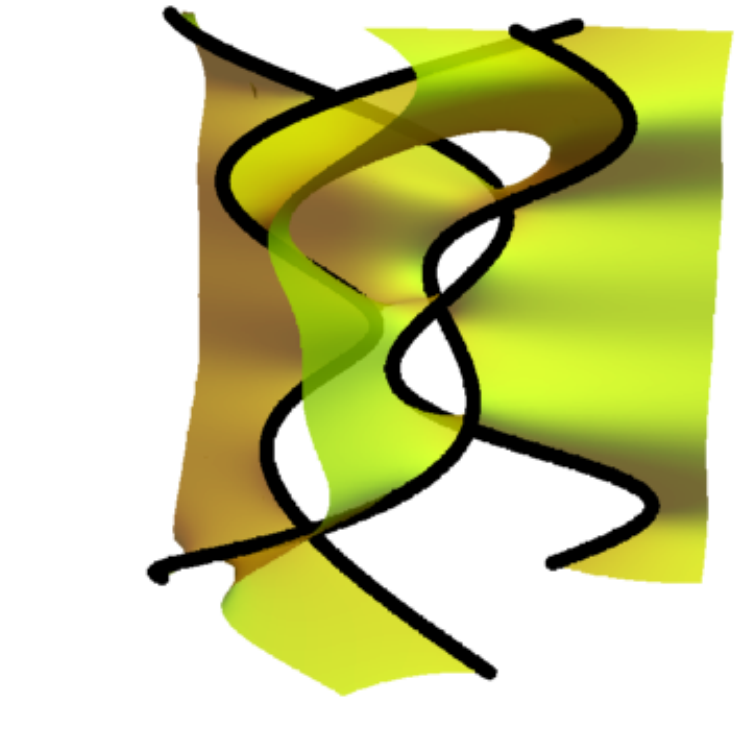}
\labellist
\Large
\pinlabel e) at 40 360
\endlabellist
\includegraphics[height=4cm]{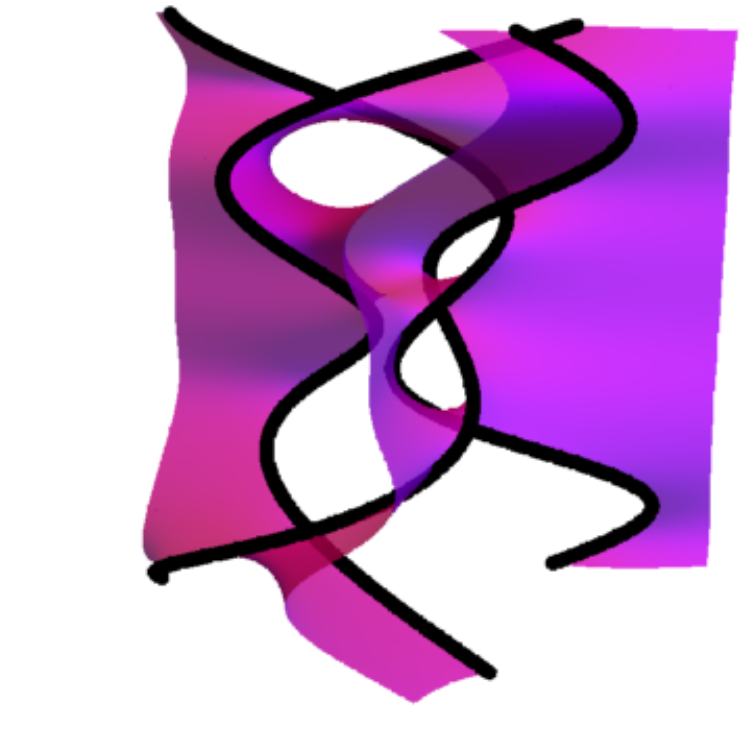}
\labellist
\Large
\pinlabel f) at 40 360
\endlabellist
\includegraphics[height=4cm]{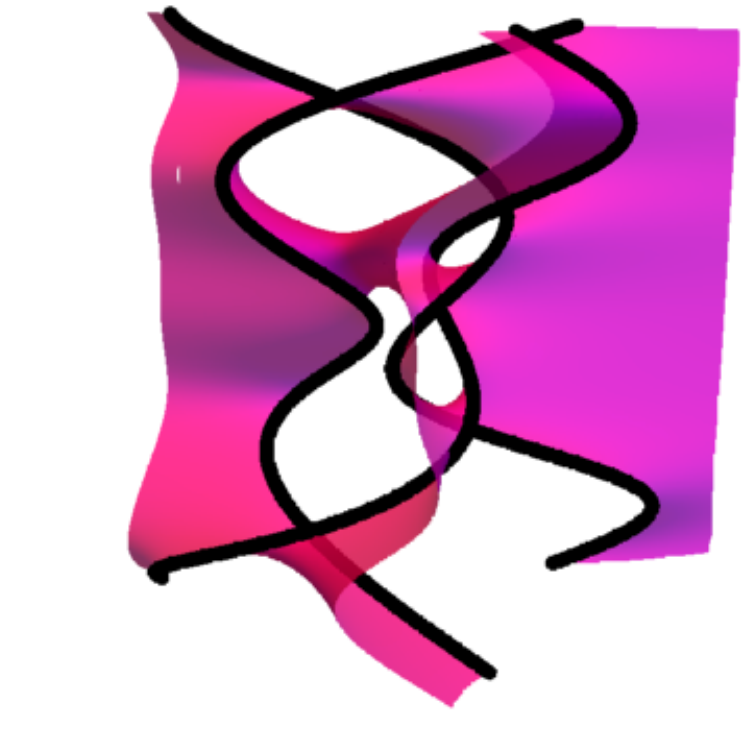}
\labellist
\Large
\pinlabel g) at 40 360
\endlabellist
\includegraphics[height=4cm]{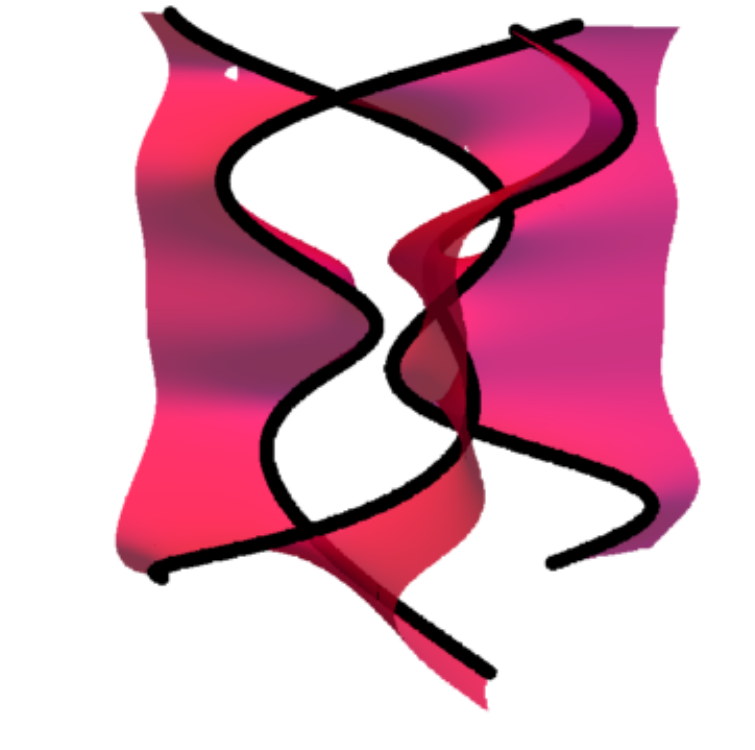}
\labellist
\Large
\pinlabel h) at 40 360
\endlabellist
\includegraphics[height=4cm]{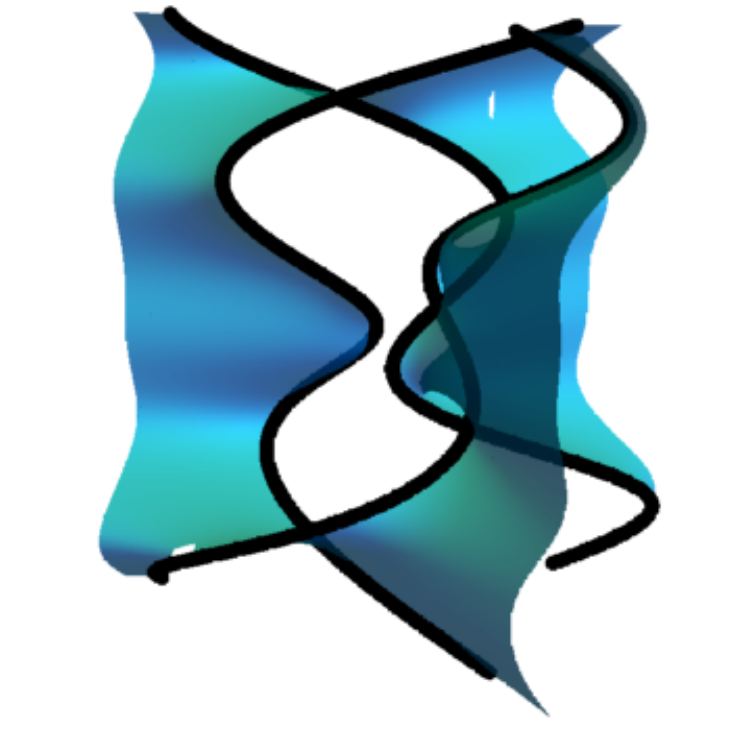}
\labellist
\Large
\pinlabel i) at 40 360
\endlabellist
\includegraphics[height=4cm]{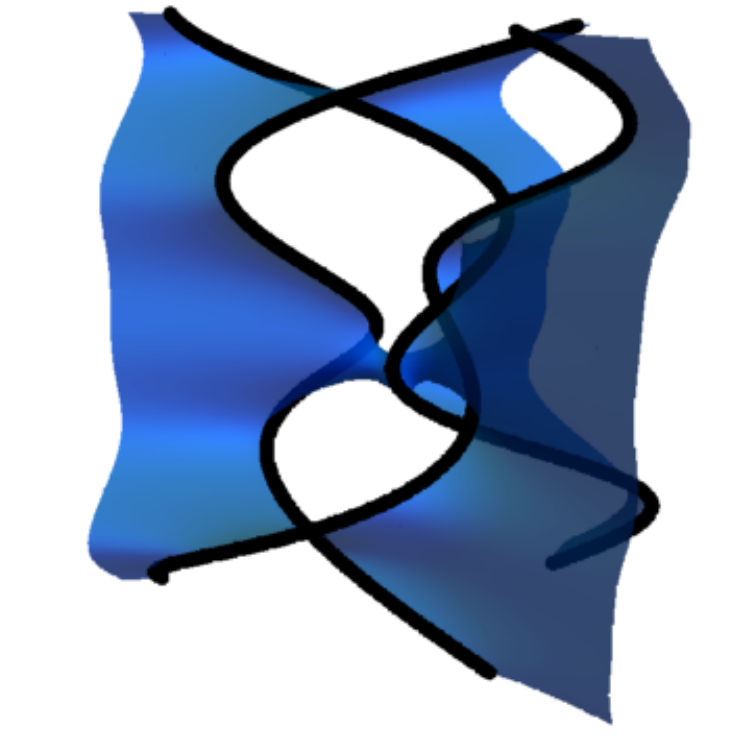}
\labellist
\Large
\pinlabel j) at 40 360
\endlabellist
\includegraphics[height=4cm]{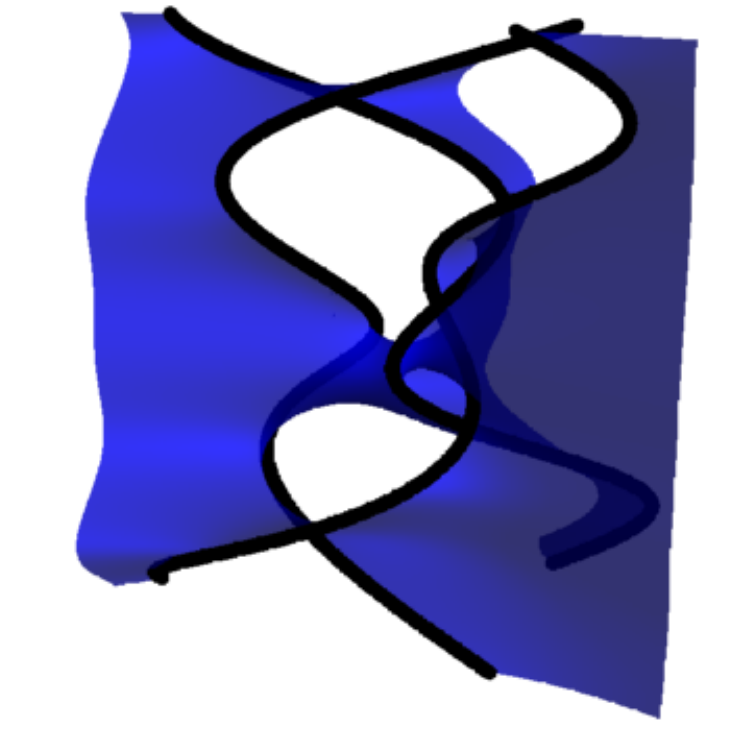}
\labellist
\Large
\pinlabel k) at 40 360
\endlabellist
\includegraphics[height=4cm]{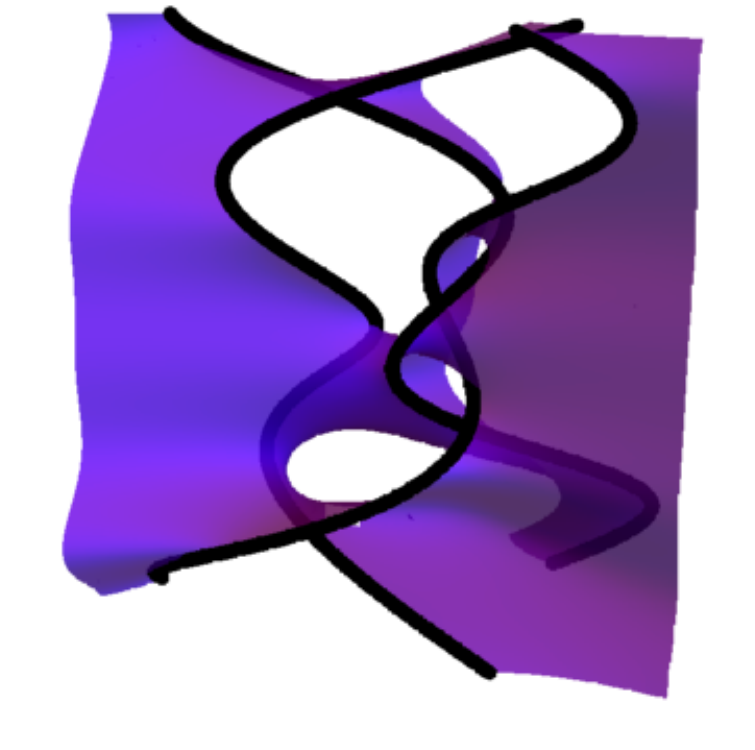}
\labellist
\Large
\pinlabel l) at 40 360
\endlabellist
\includegraphics[height=4cm]{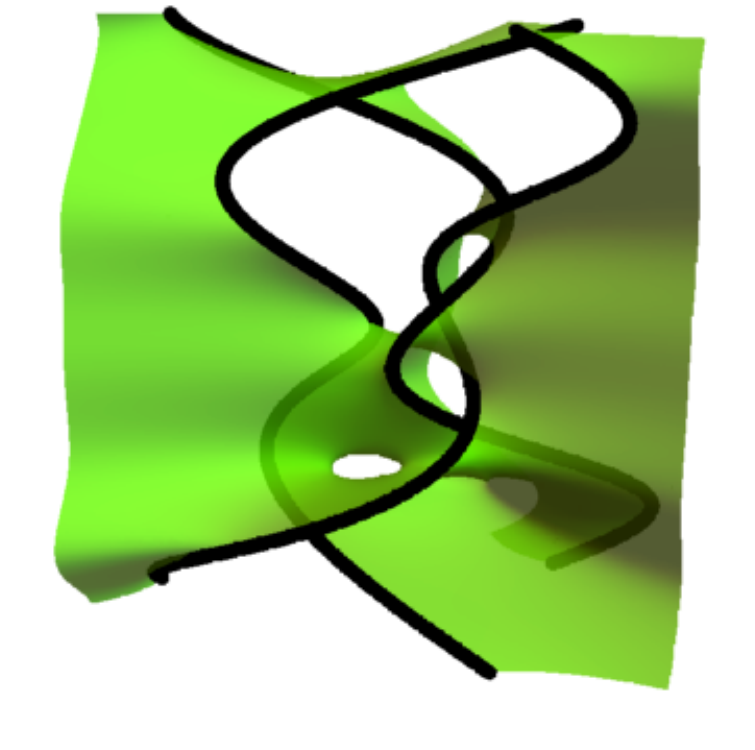}
\labellist
\Large
\pinlabel m) at 40 360
\endlabellist
\includegraphics[height=4cm]{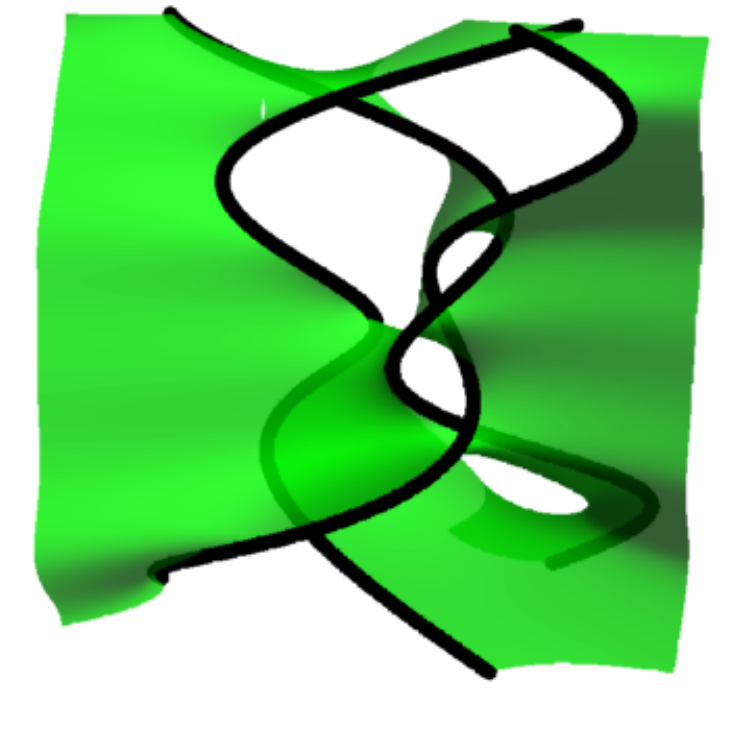}
\labellist
\Large
\pinlabel n) at 40 360
\endlabellist
\includegraphics[height=4cm]{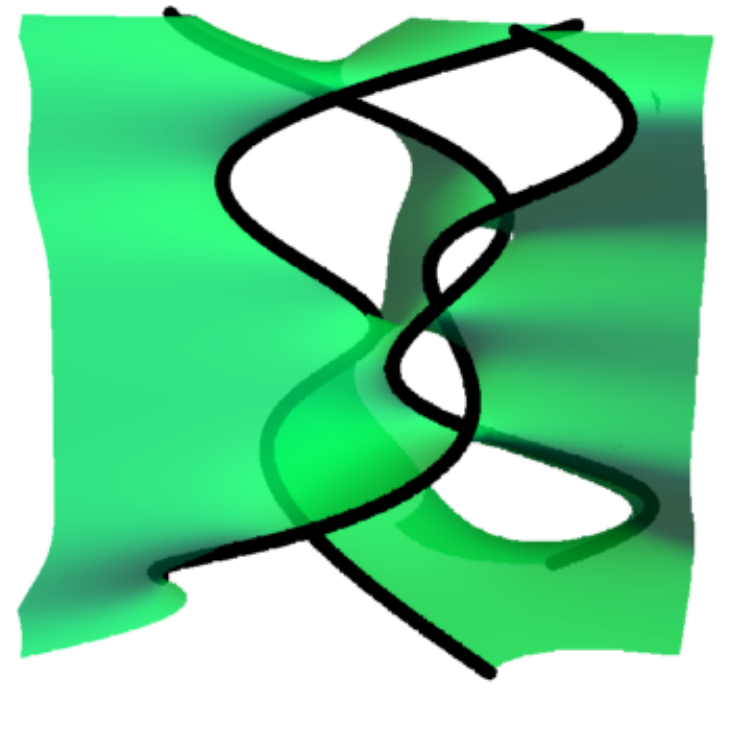}
\labellist
\Large
\pinlabel o) at 40 360
\endlabellist
\includegraphics[height=4cm]{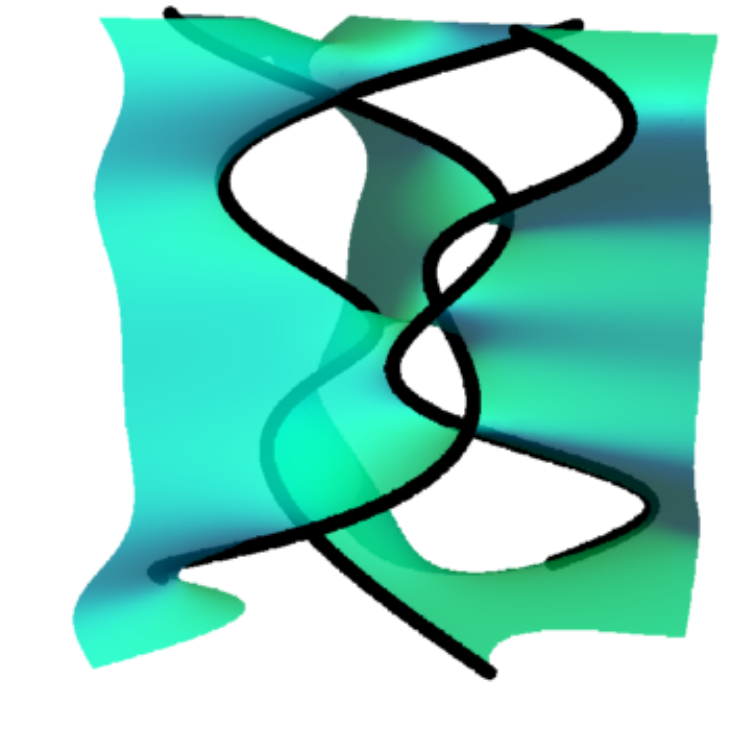}
\caption{Level sets $(\arg(g))^{-1}(\rme^{\rmi \chi})$ of $\arg(g)$. From Subfigure a) to Subfigure o) the value $\chi$ varies from 0 to $\tfrac{28}{15}\pi$. \label{fig:fibers}}
\end{figure}

Often we do not know $g_t$. For example, we know that T-homogeneous braids are P-fibered and therefore have a P-fibered geometric braid in its isotopy class, but do not have an explicit parametrisation of this representative and therefore do not have an explicit description of $g_t$.

However, with methods as in \cite{bode:polynomial, 52} we may find a parametrisation of a representative of any given braid $B$ in terms of trigonometric polynomials. This leads to a visualisation of the corresponding pesudo-fibration. Furthermore, we may use this approach to prove that for some given links $L$ its Morse-Novikov number can be realised by the argument of a polynomial map.

We illustrate this with the example of the knot $5_2$, which is the closure of the braid $B=\sigma_1\sigma_2^{3}\sigma_1\sigma_2^{-1}$ and was discussed in more detail in \cite{52}.

As in \cite{bode:polynomial, 52} we find a parametrisation of the braid $B$ in terms of trigonometric polynomials via
\begin{equation}
\bigcup_{t\in[0,2\pi]}\bigcup_{j=1}^3(z_j(t),t)\subset\mathbb{C}\times [0,2\pi]
\end{equation}
with
\begin{align*}
z_j(t)=&-\cos\left(\frac{2(t+2\pi j)}{3}\right)-\frac{3}{4}\cos\left(\frac{5(t+2\pi j)}{3}\right)\\
&-\rmi\left(\sin\left(\frac{4(t+2\pi j)}{3}\right)+\frac{1}{2}\sin\left(\frac{t+2\pi j}{3}\right)\right). 
\end{align*}

Then the corresponding loop of polynomials is as usual $g_t(u):=\prod_{j=1}^3(u-z_j(t))$. Since the knot $5_2$ is not fibered, the resulting argument map $\arg(g)$, $g(u,\rme^{\rmi t}):=g_t(u)$, must have critical points. We have $\MN(5_2)=2$.

Since we know the function $g_t$ we can plot level sets of $\arg(g)$, see Figure~\ref{fig:fibers}, and study how the topology changes. All of the subfigures display level sets in $\mathbb{C}\times[0,2\pi]$, whose common boundary is the braid $B$. Identifying the bottom and the top plane, results in a solid torus $\mathbb{C}\times S^1$. Its complimentary solid torus in $S^3$ may be filled with meridional disks, so that a visualisation of a (pseudo-)fibration in $\mathbb{C}\times[0,2\pi]$ can be used to visualise the (pseudo-)fibration in $S^3$.

Some topology changes between surfaces are easier to spot than others. We see for example that between the fourth and fifth subfigure the genus of the surface increases by one. It then decreases between the sixth and the seventh subfigure, increases again between the eighth and ninth. Between the twelfth and thirteenth subfigure the genus finally decreases.

This superficial visual analysis makes it easy to miss critical points that occur in rapid succession, that is, pairs of critical points $p_1,p_2\in\mathbb{C}\times [0,2\pi]$ for which $\arg(g(p_1))$ and $\arg(g(p_2))$ are very close. Going by Figure~\ref{fig:fibers} we might conclude that $\arg(g)$ has four critical points. However, this is not correct.

Figure~\ref{fig:critvalues52} shows the graphs of $\arg(v_j(t))$, $j=1,2,$, where $v_j(t)$, $j=1,2$, are the critical values of $g_t$. Since critical points of $\arg(g)$ correspond to points with $\tfrac{\partial \arg(v_j(t))}{\partial t}=0$, we see immediately from this plot that $\arg(g)$ has exactly 6 critical points. Comparing this with $\MN(5_2)$, we see that it has more critical points than necessary.

\begin{figure}[H]
\includegraphics[height=4cm]{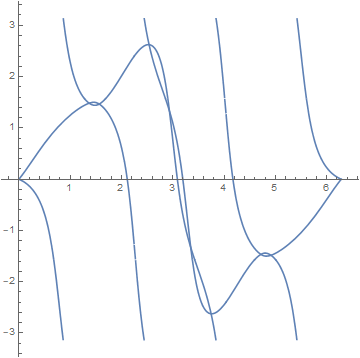}
\caption{The graphs of $\arg(v_j(t))$, $j=1,2$, where $v_j(t)$, $j=1,2$, are the critical values of $g_t$. \label{fig:critvalues52}}
\end{figure}

As discussed earlier, the critical points of $\arg(g)$ must lie on the saddle point braid, which is plotted in Figure~\ref{fig:saddlebraid}a). In Figure~\ref{fig:saddlebraid} we have coloured the saddle point braid by $\arg(g)$. In this way the critical points are seen as those points where the there is a change in the direction with which the colour wheel is traversed. Figure~\ref{fig:saddlebraid}b) also shows a level set of $\arg(g)$. Note that its intersection points with the saddle point braid are its points with horizontal tangent planes.

\begin{figure}
\centering
\includegraphics[height=7cm]{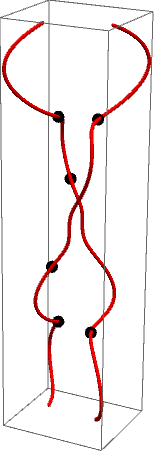}
\includegraphics[height=7cm]{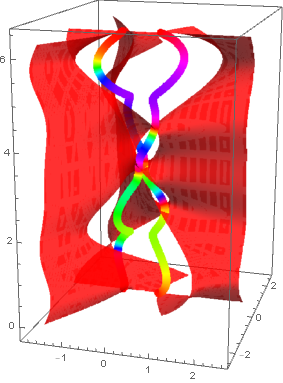}
\caption{a) The saddle point braid and the critical points of $\arg(g)$. b) The saddle point braid colored by $\arg(g)$ and a level set of $\arg(g)$.\label{fig:saddlebraid}}
\end{figure}

Knowing the location of the critical points allows us to show the topology changes in the level sets as a critical value is passed. Figure~\ref{fig:critical} shows such a change that is representative for all changes. Two sheets move towards each other as we approach the critical level set (Figure~\ref{fig:critical}a)). When we reach the level set of the critical value, the two parts meet in a double-cone, whose tip is a critical point of $\arg(g)$ (Figure~\ref{fig:critical}b)). Increasing the value of $\arg(g)$ further results in a level set with a compressing disk (Figure~\ref{fig:critical}c)). The genus has increased by 1. Of course, going through this sequence of pictures in the opposite order, starting with a compressing disk that shrinks to a double cone and splits into two separate sheets, decreases the genus.

\begin{figure}
\centering
\includegraphics[height=5cm]{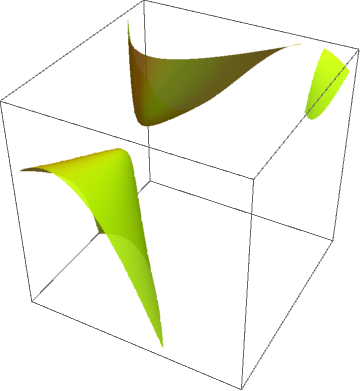}
\includegraphics[height=5cm]{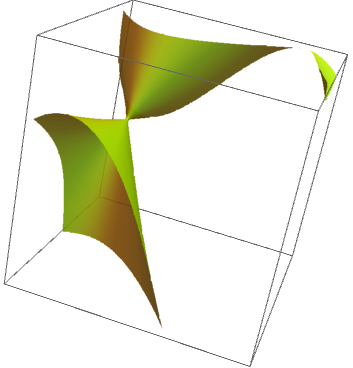}
\includegraphics[height=5cm]{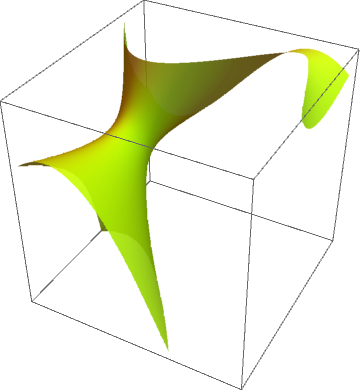}
\caption{A local change of topology of a level set of $\arg(g)$. Say $\chi\in S^1$ is the critical value of $\arg(g)$. a) Part of $\arg(g)^{-1}(\chi-\varepsilon)$ with $\varepsilon>0$ and small. b) Part of the level set $\arg(g)^{-1}(\chi)$. c) Part of the level set of $\arg(g)^{-1}(\chi-\varepsilon)$ with $\varepsilon>0$ and small.\label{fig:critical}}
\end{figure}

We know that the Morse-Novikov number of $5_2$ is 2, but $\arg(g)$ has 6 critical points. This immediately leads to the question: Could we have chosen a different parametrisation of a braid that closes to $5_2$ such that the resulting argument map of the polynomial only has two critical points? The answer to this question is ``Yes''. We will explain why and illustrate with this example a useful technique to prove that a given non-fibered link has Morse-Novikov number equal to 2.

Figure~\ref{fig:braidcritvalues} shows the braid that is formed by the critical values of $g_t$ and the strand $\{0\}\times [0,2\pi]$ both from the side and from the top. Note that the endpoints of the blue strand at the top and bottom match the endpoints of the green strand although the perspective in Figure~\ref{fig:braidcritvalues} makes that difficult to see.  The figures also show the six points, where $\tfrac{\partial \arg(v_j(t))}{\partial t}$ vanishes, corresponding to critical points of $\arg(g)$. 

\begin{figure}
\centering
\includegraphics[height=8cm]{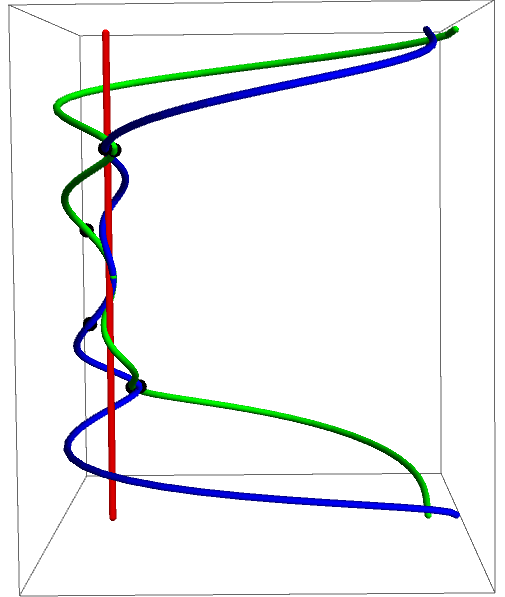}
\includegraphics[height=8cm]{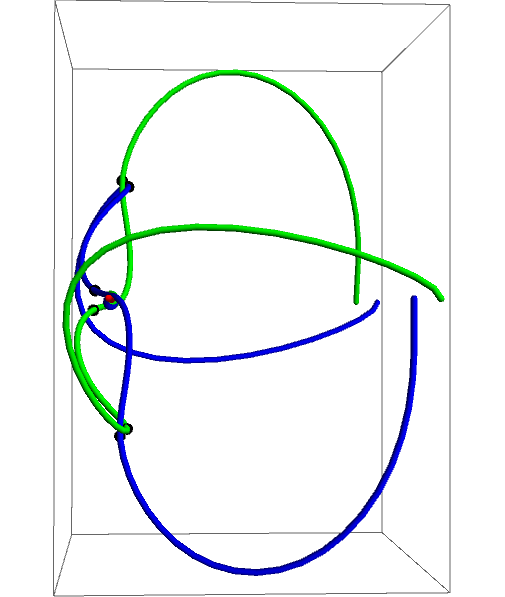}
\caption{The braid of critical values $v_j(t)$, $j=1,2$, in blue and green, together with the strand $\{0\}\times[0,2\pi]$ in red. Black points indicate the points where $\tfrac{\partial \arg(v_j(t))}{\partial t}$ vanishes, corresponding to critical points of $\arg(g)$. a) The view from the side. b) The view from the top. \label{fig:braidcritvalues}}
\end{figure}

We may deform the braid of critical values in $(\mathbb{C}\backslash\{0\})\times[0,2\pi]$ to the braid shown in Figure~\ref{fig:newcritbraid}, which only has two points where the critical values change their orientation. There are two pairs of such points in Figure~\ref{fig:braidcritvalues} that are very close together. In order to go from the braid in Figure~\ref{fig:braidcritvalues} to the one in Figure~\ref{fig:newcritbraid} we simply have to move the blue and the green strand  near this pairs of points. For the lower pair we pull the blue and the green strand from right to left behind the red strand $\{0\}\times [0,2\pi]$ until the critical point on the blue strand cancels with the next critical point on the blue strand. Likewise, for the upper pair of points we move the blue and the green strand from right to left in front of the red strand. In this case, two critical points on the green strand cancel.

\begin{figure}
\centering
\includegraphics[height=6cm]{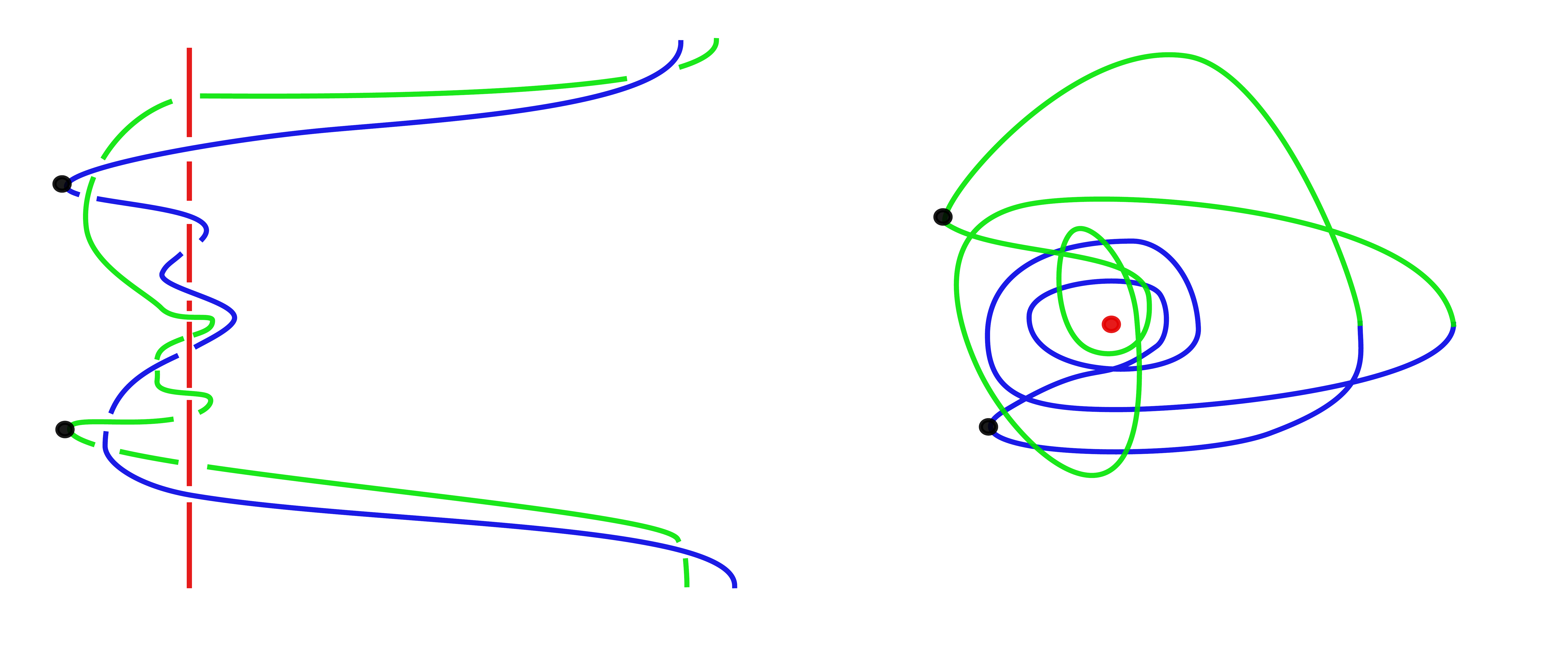}
\caption{The deformed braid of critical values. a) The view from the side. b) The view from the top.\label{fig:newcritbraid}}
\end{figure}

This deformation is a homotopy of the loop $\pi(\theta_n(g_t))$ in $V_n$, which extends to a homotopy of $\theta_n(g_t)$ in $\widehat{V}_n$. This homotopy lifts to a homotopy of $g_t$ in $\widehat{X}_n$, so that there is a loop of monic polynomials $h_t$ whose critical values form the braid in Figure~\ref{fig:newcritbraid}a), while its roots form the braid $B$ that closes to $5_2$. In particular, by construction $\arg(h)$, $h(u,\rme^{\rmi t}):=h_t(u)$, has exactly 2 critical points. Since $5_2$ is not fibered, its Morse-Novikov number must be at least 2. We have found a circle-valued Morse map with the required properties and exactly 2 critical points, which shows that $\MN(5_2)=2$. Of course, we already knew this before, but the method can be applied to any given non-fibered knot or link for which we can deform the braid of critical values in such a way that the number of argument-critical points reduces to 2. 

The algorithm in \cite{bode:polynomial} produces for any given link a loop of polynomials $g_t$. We may then study the corresponding loop of critical values $\pi(\theta_n(g_t))$ and try to deform it in a way that reduces the number of critical points. In general, this produces an upper bound on the Morse-Novikov number. However, if we achieve a deformation whose lift has an endpoint $h_t$ such that $\arg(h)$ does not have any critical points, then obviously the Morse-Novikov number is equal to zero. Likewise, if we already know that the link in question is not fibered (like $5_2$ above) and we achieve a deformation of the critical values such that the corresponding loop of polynomials $h_t$ results in exactly 2 critical points of $\arg(h)$, then the Morse-Novikov number must be equal to 2.

\subsection{Visualisations from Rampichini diagrams and films}\label{sec:rampi}

As pointed out, the method from the previous subsection requires the knowledge of an explicit function $g_t$. 
The problem with this approach is that if we want $g_t$ to have certain properties like inducing a fibration $\arg(g)$, usually (even for T-homogeneous braids) we do not know the loop $g_t$. We proved its existence in Proposition~\ref{prop:key}, but a key step in this proof was to deform a loop in $\widehat{V}_n$, which lifts to a homotopy of loops in $\widehat{X}_n$ whose endpoint is $g_t$. This lifting procedure requires solving a 1-parameter family of a set of polynomial equations, which is challenging even with the help of computers if the degree of the polynomials $g_t$ is $n>3$.

However, for any braid $B$ and any embedded tree $T$ we know the loop of critical values in $V_n$ and the basepoint $g_0$ of a loop $g_t$ such that $\arg(g)$ has exactly $\beta_T(B)$ critical points. This is sufficient to draw the Rampichini diagram of $g_t$ as in \cite{bode:braided}. This diagram encodes all the information we need in order to visualise the fibration.

A Rampichini diagram consists of a square that contains a number of curves that are labeled by transpositions. The square should be interpreted as a torus with both the horizontal and vertical edges corresponding to coordinate axis, going from 0 to $2\pi$. The vertical direction corresponds to $t$, the variable that parametrises the loop $g_t$. The horizontal direction corresponds to $\varphi=\arg(g)$. 

\begin{figure}[H]
\labellist
\small
\pinlabel 0 at 150 160
\pinlabel 0 at 210 100
\pinlabel $2\pi$ at 850 100
\pinlabel $2\pi$ at 150 800
\pinlabel $(2\ 3)$ at 270 260
\pinlabel $(1\ 4)$ at 270 460
\pinlabel $(3\ 4)$ at 270 630
\pinlabel $(1\ 4)$ at 410 840
\pinlabel $(2\ 4)$ at 530 840
\pinlabel $(3\ 4)$ at 690 840
\pinlabel $(2\ 4)$ at 415 660
\pinlabel $(2\ 3)$ at 440 465
\pinlabel $(1\ 4)$ at 440 320
\pinlabel  $(1\ 2)$ at 550 670
\pinlabel $(3\ 4)$ at 495 570
\pinlabel $(2\ 4)$ at 630 360
\pinlabel $(3\ 4)$ at 720 230
\pinlabel $(1\ 4)$ at 900 590
\pinlabel $(1\ 2)$ at 900 440
\pinlabel $(3\ 4)$ at 900 260
\pinlabel $(1\ 2)$ at 610 575
\Large
\pinlabel $t$ at 100 500
\pinlabel $\varphi$ at 500 50
\endlabellist
\centering
\includegraphics[height=7cm]{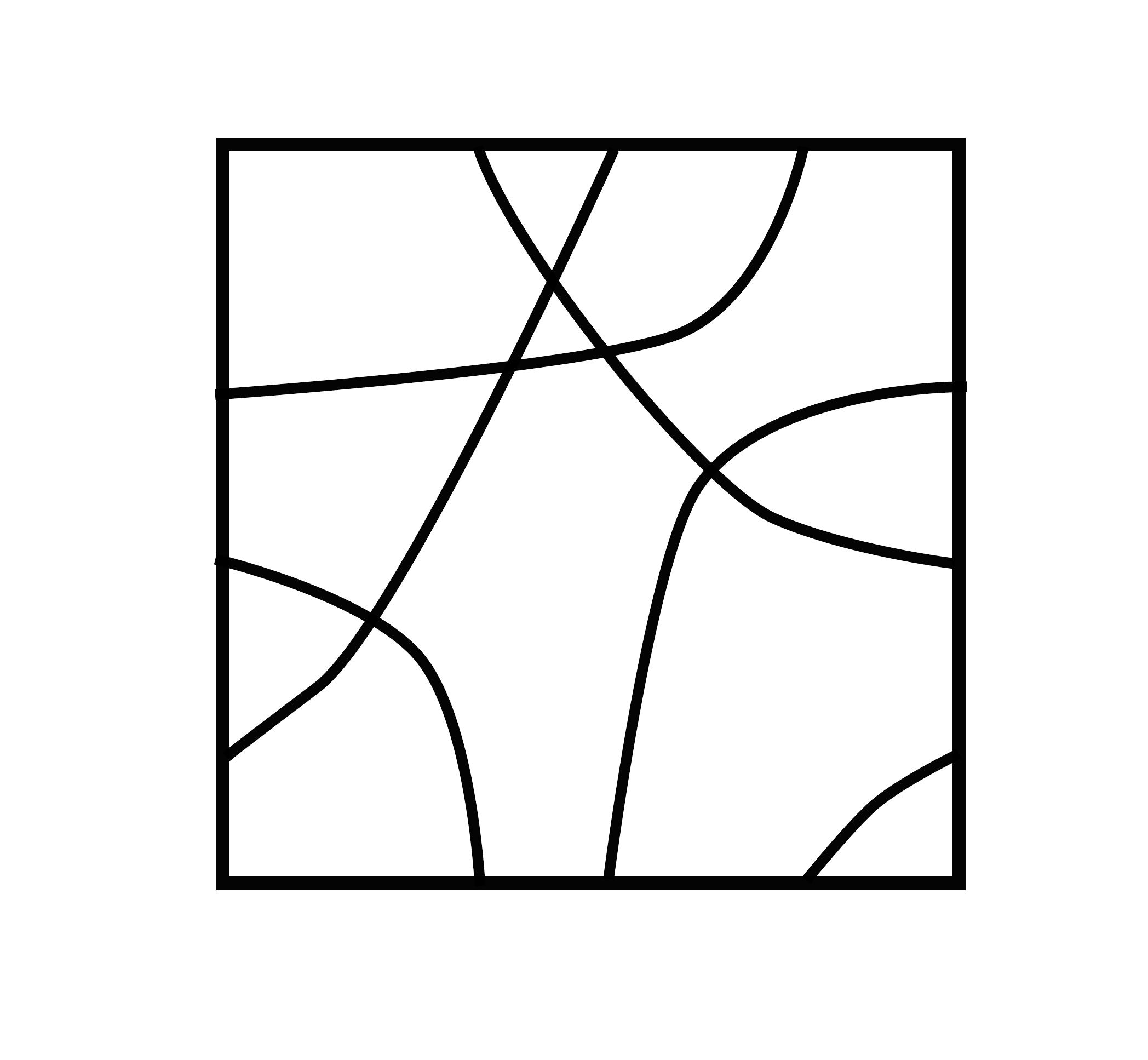}
\caption{A Rampichini diagram.\label{fig:rampichinidiagram}}
\end{figure}

The curves in the square keep track of the motion of the critical values $v_j(t)$, $j=1,2,\ldots,n-1$, of $g_t$. More precisely, a point with coordinates $(\varphi_*,t_*)$ lies on a curve if and only if there is a $j\in\{1,2,\ldots,n-1\}$ such that $\arg(v_j(t_*))=\varphi_*$, where the critical values are ordered as usual $0<\arg(v_1(t))<\arg(v_2(t))<\ldots<\arg(v_{n-1}(t))<2\pi$.

Since the roots of $g_t$ are assumed to form a P-fibered geometric braid, all of the curves in the square are either strictly monotone increasing or strictly monotone decreasing, depending on the sign of $\tfrac{\partial\arg(v_j(t))}{\partial t}$ of the corresponding critical value $v_j(t)$. We may extend the definition of a Rampichini diagram to loops of polynomials $g_t$ whose roots do not form P-fibered geometric braids, in which case there are points where the curves in the square have vertical tangencies. Assuming a generic non-degeneracy condition, the number of such vertical tangencies is exactly the number of critical points of $\arg(g)$, $g(u,\rme^{\rmi t})=g_t(u)$. The term ``Rampichini diagram'' is reserved for loops $g_t$ that correspond to P-fibered braids. If $g_t$ does not correspond to a P-fibered braid, the resulting diagram is simply called a square diagram.

Since every point on a curve in the square corresponds to a critical value $v_j(t)$ it comes with a transposition $\tau_j$ defined from the singular foliation induced by $\arg(g_t)$. These transpositions label the points on the curves in the square. The labels (and thus also the cactus) only change at the right edge of the square and at intersection points of the curves, corresponding to a critical value $v(t)$ crossing the line $\arg(v(t))=0$ or two critical values having the same argument, respectively. Therefore, it is sufficient to label the arcs of the curves between such points. This means that for any fixed value of $t$ the labels in the square diagram at that fixed height $t$, read from left to right, are exactly the cactus of $g_t$. An example of a Rampichini diagram is displayed in Figure~\ref{fig:rampichinidiagram}.


For each $t\in[0,2\pi]$ for which the critical values have distinct arguments the map $\arg(g_t)$ induces a singular foliation on $\mathbb{C}$, similar to \cite{obf}. As mentioned in the previous section, the roots of $g_t$ are elliptic singularities and the critical points are hyperbolic points. The Rampichini diagram essentially stores for every value of $t\in[0,2\pi]$ the information about the cactus of $g_t$, which is enough to draw the entire singular foliation of $\mathbb{C}$. This combinatorial structure only changes at finitely many values of $t\in[0,2\pi]$, so that we have a topological description or a visualisation of the fibration map $\arg(g)$. All of this is described in more detail in \cite{mortramp, rampi, bode:braided}. This approach to visualise fibrations is quite similar to a very recent visualisation of fibrations for homogeneous braids that has been offered by \cite{maggie}. 

Figure~\ref{fig:movie} shows a sequence of singular foliations, while the four roots trace out the braid $\sigma_3$. Rampichini calls such a sequence a film \cite{rampi}.

\begin{figure}[H]
\centering
\includegraphics[height=20cm]{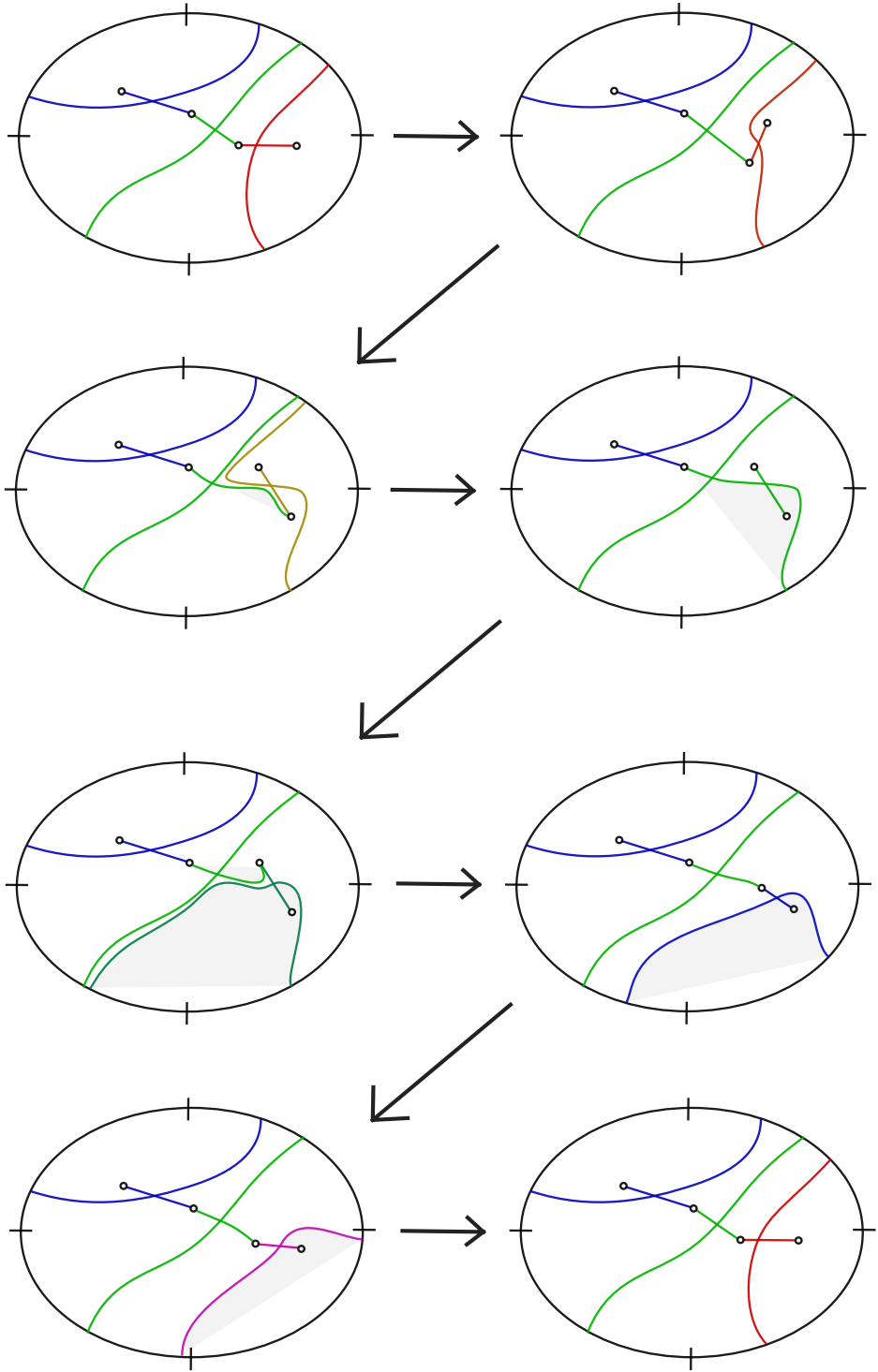}
\caption{A film of how the singular foliation on the disk changes during a crossing of the form $\sigma_3$.\label{fig:movie}}
\end{figure}

We can see in these figures that the saddle points form a trivial braid on 3 strands. Since the first picture is the same as the last, this sequence of figure represents a loop of polynomials. (The fact that such a loop is realised by polynomials is a consequence of Riemann's existence theorem \cite{bode:braided}.) We can thus compose loops of this form for any Artin generator and its inverse. In this way, Figure~\ref{fig:movie} represents a visual proof of Theorem~\ref{thm:saddle}.

Figure~\ref{fig:rampisigma1} shows part of a Rampichini diagram that corresponds to the film in Figure~\ref{fig:movie}. The dotted horizontal lines indicate the heights, that is, the values of $t$, for which Figure~\ref{fig:movie} shows the induced singular foliation. As in \cite{bode:thomo} we only include the transposition labels along an edge of the square and turn the intersections of lines in the square into crossings as in a (virtual) knot diagram with the convention that a line is the undecrossing arc if the corresponding critical value has smaller absolute value at the intersection. With this information and the rules from \cite{bode:braided, bode:thomo} we could reproduce the entire Rampichini diagram with all of its labels. 

The cactus of the polynomials corresponding to the first three pictures in Figure~\ref{fig:movie} is $\tau_1=(2\ 3)$, $\tau_2=(2\ 4)$, $\tau_3=(1\ 2)$. The fourth figure corresponds to a value of $t$ for which there is a crossing in Figure~\ref{fig:rampisigma1}. Therefore, we have $\arg(v_1)=\arg(v_2)$ and there is no well-defined cactus. The cactus for the fifth figure is $\tau_1=(2\ 4)$, $\tau_2=(3\ 4)$, $\tau_3=(1\ 2)$. The sixth figure has again has no well-defined cactus, since two critical values have the same argument. The cactus of the seventh figure is $\tau_1=(2\ 4)$, $\tau_2=(1\ 2)$, $\tau_3=(3\ 4)$. The last cactus is equal to the first one.

Starting from any Rampichini diagram we can draw a film as in Figure~\ref{fig:movie} that completely describes the fibration.

Rampichini diagrams offer another way to visualise fibrations. Instead of using transpositions to label the curves in the square, we may use band generators, so that $(i\ j)$ is replaced by $a_{i,j}$ if the corresponding line is strictly monotone increasing (or, equivalently, $\tfrac{\partial \arg(v_j(t))}{\partial t}>0$ for the corresponding critical value $v_j(t)$) and $(i\ j)$ is replaced by $a_{i,j}^{-1}$ if the line is strictly monotone decreasing (or, equivalently, $\tfrac{\partial \arg(v_j(t))}{\partial t}<0$ for the corresponding critical value $v_j(t)$).

\begin{figure}
\labellist
\small
\pinlabel 0 at 150 160
\pinlabel 0 at 210 100
\pinlabel $2\pi$ at 850 100
\pinlabel $2\pi$ at 150 800
\pinlabel $(2\ 3)$ at 330 840
\pinlabel $(2\ 4)$ at 480 840
\pinlabel $(1\ 2)$ at 650 840
\Large
\pinlabel $t$ at 100 500
\pinlabel $\varphi$ at 500 50
\endlabellist
\centering
\includegraphics[height=5cm]{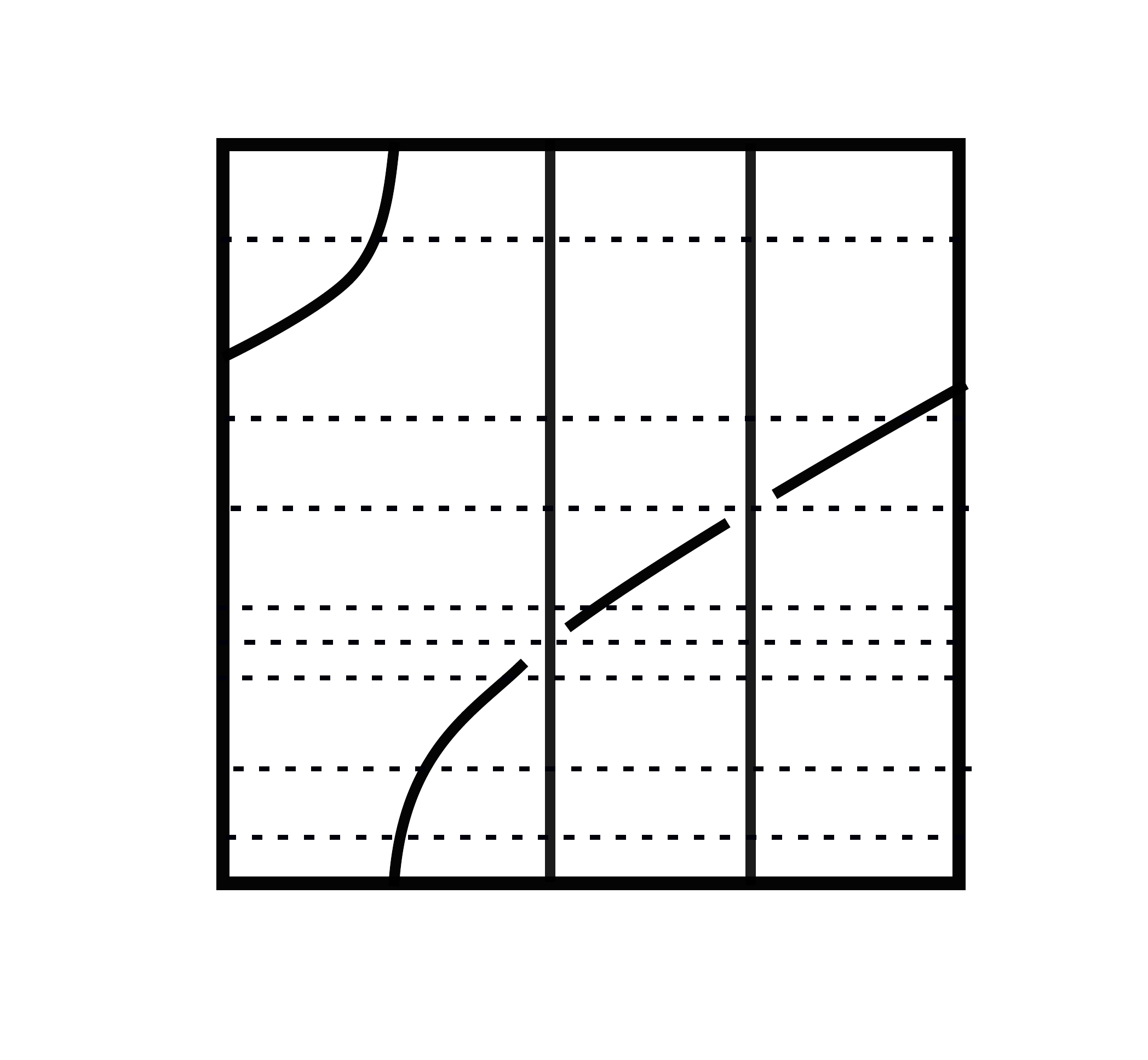}
\caption{Rampichini diagram corresponding to the film in Figure~\ref{fig:movie}.\label{fig:rampisigma1}}
\end{figure}

We obtain a word in BKL-generators for the fiber surfaces of the fibration as follows. We fix a value $\arg(g)=\varphi_*$, which corresponds to a vertical line in the Rampichini diagram. Reading the BKL-labels of the curves in the Rampichini diagram at intersection points with this vertical line, going from the bottom to the top, spells a BKL-word for the fiber $\arg(g)^{-1}(\varphi_*)$. By varying $\varphi_*$ we obtain a finite sequence of band words, representing the (topologically equivalent) braided surfaces.

An important aspect of this visualisation and the one obtained from a film is the insight that the saddle point braid consists exactly of those points where a fiber surface intersects the corresponding horizontal plane tangentially. That is, $c_j(t)$ is a critical point of $g_t$ if and only if the intersection of the level set $\arg(g)=\arg(v_j(t))=\arg(g_t(c_j(t)))$ and $\mathbb{C}\times \{t\}$ is tangential.

All of the fiber surfaces are braided surfaces and therefore consist of $n$ disks that are connected by a number of half-twisted bands. Each half-twisted band has exactly one such saddle point.

This is a reason why the topological type of the saddle point braid on its own is not that helpful for the visualisation. It tells us where the saddle points are, but the crucial piece of information, which of the disks are connected by the corresponding band, is missing. For this we need the additional information of the tree or the cactus of the polynomial, which is stored in the Rampichini diagram.

\subsection{Fibrations for homogeneous braids}

The visualisation technique from Section~\ref{sec:rampi} offers a complete graphical description of the fibration without the knowledge of an explicit expression for the fibration map or the loop of polynomials $g_t$. Furthermore, it illustrates that for T-homogeneous braids the saddle point braid may be chosen to be the trivial braid. However, it requires some extra work to go from a sequence of figures as in Figure~\ref{fig:movie} to a sequence of figures as in Figure~\ref{fig:fibers}, which display how the different fiber surfaces sweep out the link complement. In this section, we will describe a new visualisation technique for the fibrations of homogeneous braids. As in Section~\ref{sec:rampi} we do not need to know an expression of the corresponding polynomials $g_t$ and the saddle point braid is easily seen to be the trivial braid. Yet we arrive at a graphical description of the fibration in terms of a sequence of fiber surfaces, which makes it perhaps more practical than the technique from Section~\ref{sec:rampi}.

The rough idea of this technique is displayed in Figure~\ref{fig:mikami}a). The starting point is a banded surface, whose boundary is the closure of a homogeneous braid on four strands. The fiber surface is thus realised by four disks connected by a number of bands, one for each crossing of the braid. Note that the sign of the crossings within any row does not change, so that the braid is indeed homogeneous. We now push the top disk up until it is located as in the second image in Figure~\ref{fig:mikami}a). The disk is now pointing outwards, that is, having started with a non-zero section of tangent vectors $v$ of the disk, normal to its boundary, pushing the disk as described and moving the tangent vectors with it results exactly in $-v$. Note that now the opposite side of the surface is pointing upwards. In Figure~\ref{fig:mikami}a) the two sides of the surface are coloured blue and red, respectively.

In order to move from the second image to the third image, we have to push the red disk on the outside down, while moving blue disk in the layer below it upwards. This is tricky. Later we will describe in more detail how this is done and what happens with the bands between these two disks. For now, the reader will have to accept that there is such a motion of surfaces fixing the boundary that moves the red disk on the outside one layer downwards, while moving the blue disk on the inside one layer up, resulting in the third image. This motion is repeated until we reach the fifth and last image in Figure~\ref{fig:mikami}a), where the red disk on the outside has arrived at the bottom layer.

We may now pull the red disk on the outside down, while keeping its boundary fixed. It passes through the point at infinity (if we think of our images as placed in $\mathbb{R}^3$) and eventually sits on the inside with its blue side facing up. We have thus reached the first image again. Note that this sequence of surfaces fills the entire link complement, they can be made disjoint and they are isotopies with fixed boundary of the original Seifert surface. Thus, if we have a convincing picture for the motion that lowers a red outside disk and raises a blue inside disk one layer, we have a visualisation of the fibrations for homogeneous braids. Note that this approach is philosophically similar to Stallings' original proof in \cite{stallings} that closures of homogeneous braids are fibered. It is built on the fact that surfaces that are obtained from fiber surfaces via an operation called Murasugi sum are themselves fiber surfaces of a fibration. The surfaces for a homogeneous braid are obtained by repeated Murasugi sums of fiber surfaces for braids on two strands. Stallings' proof therefore reduces to the (known) case of braids on two strands. Similarly, our visualisation technique is reduced to the particular case of a braid on two strands and the general case of a homogeneous braid is obtained by stacking the pictures as in Figure~\ref{fig:mikami}a).

Figure~\ref{fig:mikami}b) and c) illustrate the motion of the surface that lowers the red outside disk and raises the blue inside disk. It is very important to understand that the displayed surface is not meant to be a fiber surface or part of a fiber surface. Instead, it visualises the motion of a part of the fiber surface near a band. Starting at the very left of either figure we see a red sheet on top, pointing to the front (outside), and a blue sheet at the bottom pointing into the diagram plane (inside). The two sheets are connected by a half-twisted band as usual. The shading in Figure~\ref{fig:mikami}c) indicates the two sides of the surface, blue and red. Figure~\ref{fig:mikami}b) highlights the contours of the surface, that is, the fold lines where the there is change of which side of the surface is facing the reader and thus a change of colour. Note that all surfaces are smooth. The contours are an attribute of the chosen perspective on the surface.

Going from left to right in Figures~\ref{fig:mikami}b) and \ref{fig:mikami}c) we see how the surface is deformed near a band until at the very right we see a blue sheet at the top, pointing to the back (inside), and a red sheet at the bottom pointing to the front (outside), again connected by a half-twisted band. Therefore, the left end of the figures shows a part of the surface at the beginning of the motion and the right end of the figure shows the desired endpoint of the motion. We now describe the intermediate steps in more detail.

\begin{figure}
\labellist
\Large
\pinlabel a) at 20 550
\pinlabel b) at 20 370
\pinlabel c) at 20 200
\endlabellist
\centering
\includegraphics[height=13cm]{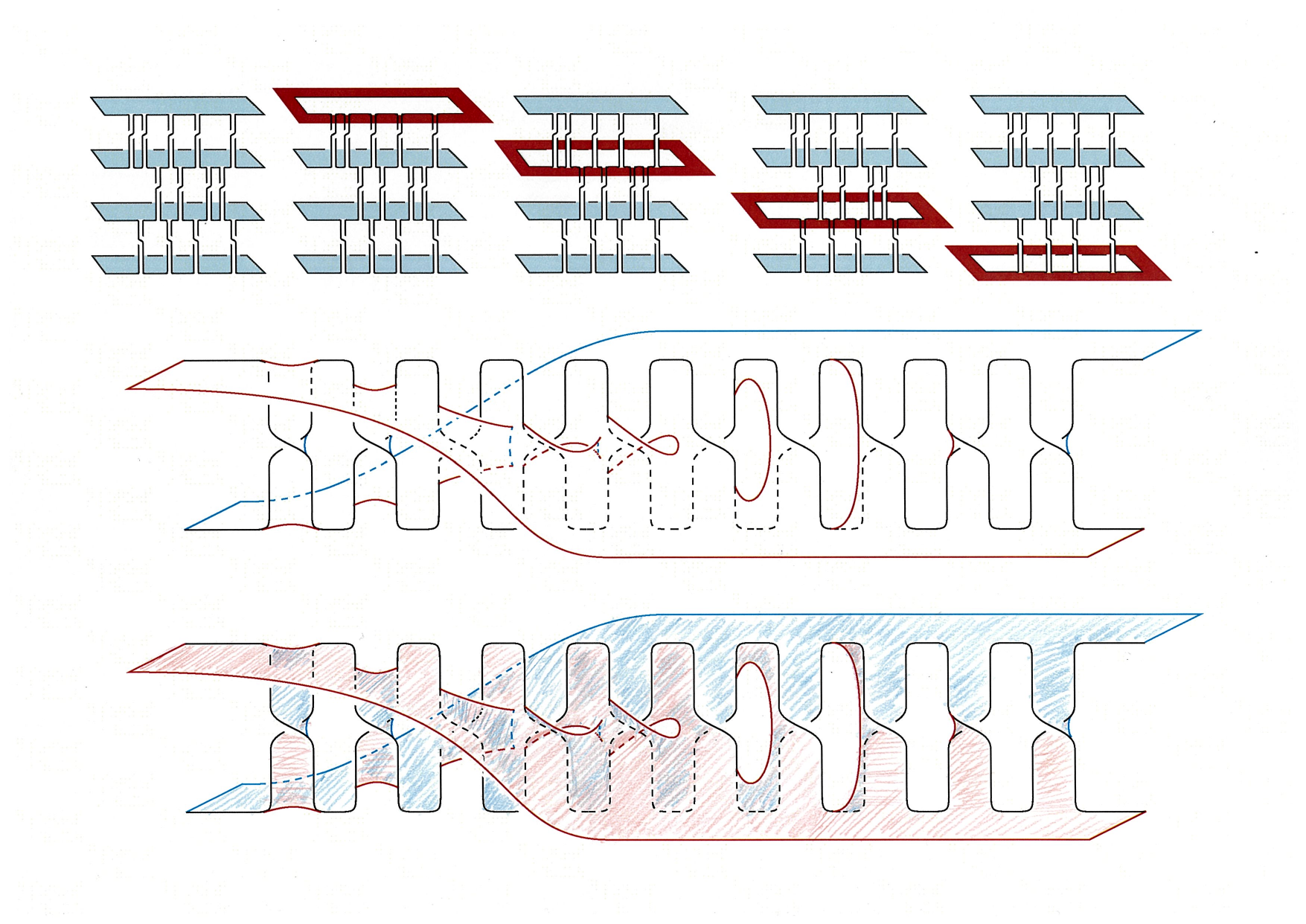}
\caption{Fibrations for homogeneous braids. a) A visualisation of several fiber surface of a fibration for a homogeneous braid on four strands. b) An isotopy of a fiber surface that lowers the red outside disk and raises the blue inner disk by one level, highlighting the motion of the contour lines. Going from left to right, we see how a fiber surface is isotoped near a band. c) The same isotopy, now with a coloured shading that indicates the two sides of the surface. \label{fig:mikami}}
\end{figure}

In order to do this, it is helpful to remember some basics on projections of surfaces. Figure~\ref{fig:contour}a) shows a part of a surface that is folded in way that creates two contour lines, one blue and one red. The two lines meet at a point as illustrated. Again we want to emphasize that all surfaces are smooth. The intersection point between the two contours is not a singular point of the surface. It depends on the chosen perspective on the surface and might be understood as a feature of the corresponding projection map from the surface to the diagram plane, rather than an artefact of the surface itself. This shows how we can apply an isotopy to a flat piece of a surface, i.e., without cusps or fold lines, to obtain a surface with two fold lines meeting in a point. Naturally, applying the inverse isotopy brings us from Figure~\ref{fig:contour}a) to a flat, smooth piece of surface.

Figure~\ref{fig:contour}b) displays the neighbourhood of a saddle point of a surface with one red contour line. We may place a finger on the saddle point and push the saddle point and the surface downwards. If our finger is not orthogonal to the diagram plane, the resulting surface displays two cusps as in Figure~\ref{fig:contour}c). The single red contour line has split into two red lines, each connecting the boundary to a cusp, and a blue contour that is connecting the two cusps. We may think of this as a first Reidemeister move applied to the fold line. Again, the cusps are not singular points of the surface. We thus have an isotopy from Figure~\ref{fig:contour}b) to Figure~\ref{fig:contour}c) and its inverse allows us to cancel two cusps as in Figure~\ref{fig:contour}c).

\begin{figure}
\labellist
\Large
\pinlabel a) at 20 140
\pinlabel b) at 143 140
\pinlabel c) at 285 140
\endlabellist
\centering
\includegraphics[height=5cm]{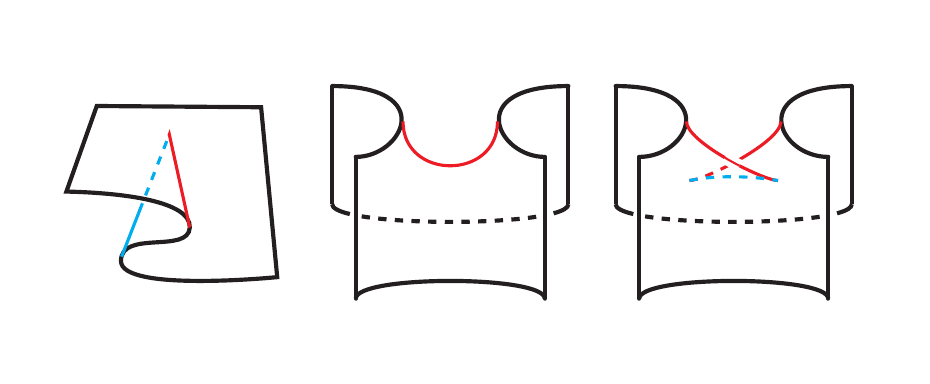}
\caption{Folds and cusps. a) A folded, smooth surface with two contour lines meeting in a point. b) A saddle point of a surface with one contour line. c) After an isotopy the surface displays two cusps and three contour lines. \label{fig:contour}}
\end{figure}

\begin{wrapfigure}{r}{0.4\textwidth}
\centering
\includegraphics[width=0.12\textwidth]{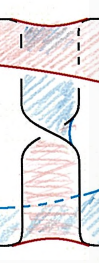}\qquad\quad
\includegraphics[width=0.12\textwidth]{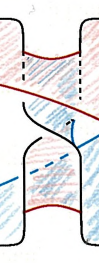}
\caption{The initial surface near a crossing and its change after the first step of the isotopy.\label{fig:mikami2}}
\end{wrapfigure}
We now return to Figures~\ref{fig:mikami}b) and c) to discuss the motion of the surface. As mentioned before we start at the left end of the figure, where we have two sheets connected by a half-twisted band. It is also displayed on the left of Figure~\ref{fig:mikami2}. The top red surface extends out of the diagram plane towards the reader, while the bottom blue surface extends into the diagram plane. There are three fold lines (or contours) and no cusps. Each contour connects two points on the boundary braid. We have two contours at the top and the bottom of the band and one almost vertical contour next to the crossing. Which side of the crossing it is on depends on the sign of the crossing.

Moving our gaze to the right to the next band in Figure~\ref{fig:mikami}c), we see the result of a small isotopy to our original surface in a neighbourhood of the same band. It is also shown on the right of Figure~\ref{fig:mikami2}. Both at the bottom and the top of the band we may hook a finger around the contour and pull both of them towards the crossing. At the same time the piece of the surface next to the crossing and with the nearly vertical contour is pushed to the right, away from the crossing. Pulling the horizontal contours towards the crossing means that their endpoints on the boundary braid also move towards the crossing. Similarly, pushing the vertical contour away from the crossing makes its two endpoints on the boundary move away from the crossing. Eventually, the endpoints of the horizontal contours have to meet the endpoints of the vertical contour. Pushing these two intersection points of the contours into the inside of the surface creates two cusps, corresponding to the points where the red (horizontal) and blue (vertical) contours meet, as illustrated with the next band in Figures~\ref{fig:mikami}b) and c), also shown on the left of Figure~\ref{fig:mikami3}.

\begin{wrapfigure}{r}{0.6\textwidth}
\centering
\includegraphics[width=0.15\textwidth]{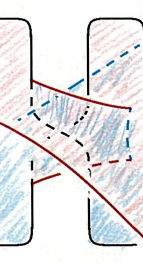}\quad\quad
\includegraphics[width=0.13\textwidth]{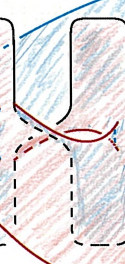}\quad\quad
\includegraphics[width=0.20\textwidth]{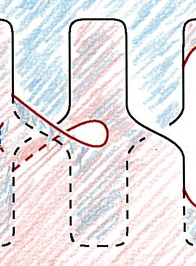}
\caption{The surface after the second, third and fourth step of the isotopy near a band.\label{fig:mikami3}}
\end{wrapfigure}
If we keep pulling the upper horizontal contour down and the lower horizontal contour up, they pass each other as in a second Reidemeister move, see the middle part of Figure~\ref{fig:mikami3} or the fourth band in Figures~\ref{fig:mikami}b) and c). We may now apply the isotopy going from Figure~\ref{fig:contour}c) to Figure~\ref{fig:contour}b), which corresponds to a first Reidemeister move and cancels the two cusps. Note that all of these isotopies of the corresponding surfaces fix the boundary braid. The analogy with Reidemeister moves refers to the motion of the contours, not the boundary braid. The resulting surface is shown on the right in Figure~\ref{fig:mikami3}, which is the fifth band in Figure~\ref{fig:mikami}b) and c). The resulting contour starts a bit above the crossing, goes downwards in front of the crossing, forms a small loop and passes behind the original band back to the boundary braid.

\begin{wrapfigure}{r}{0.6\textwidth}
\centering
\includegraphics[width=0.18\textwidth]{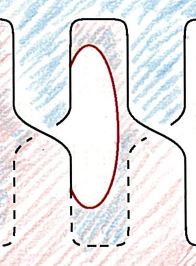}\quad\quad
\includegraphics[width=0.17\textwidth]{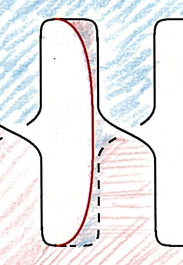}\quad\quad
\includegraphics[width=0.14\textwidth]{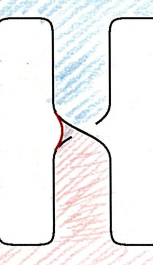}\quad\quad
\caption{The surface after the fifth, sixth and seventh step of the isotopy near a band.\label{fig:mikami4}}
\end{wrapfigure}
In the next step we grow this loop until it fills the entire gap between the original band and the next band to the right. In doing so, the endpoints of the contour move past the crossing until they lie on the horizontal parts of the boundary braid between two bands, see the left and middle part of Figure~\ref{fig:mikami4}. 

We may move the contour further towards the band that is directly to the right of the band that we started with until it is a small vertical contour directly to the left of the crossing as seen in the penultimate band in Figures~\ref{fig:mikami}b) and c) as well as on the right of Figure~\ref{fig:mikami4}. Pushing the surface near the crossing towards the right, produces a blue vertical contour directly to the right of the crossing as seen in the last band of Figures~\ref{fig:mikami}b) and c). We have described most of this motion of the surface in terms of the motion of the contour lines and the creation and cancellation of cusps. Figure~\ref{fig:mikami}c) shows how that motion extends to the surface. That is, throughout the motion the red outside disk is pushed downwards and the blue inside disk is pushed upwards, giving us the desired isotopy.

Note that this isotopy moves a band of the original surface to the band directly to the right of it. This is understood as a cyclic operation, that is, the last band is moved to the first band. The fact that all bands are moving to the right is owed to the fact that in the initial surface the blue contours in this example lie to the right of the crossings. If we had pictured crossings with the opposite sign, then the corresponding isotopy would move all bands to the left. This illustrates why this techniques requires that the braid in question is homogeneous. In every row the crossings need to have the same sign, so that there is in every row one direction in which all the bands are pushed.

For fiber surfaces in a braided open book the number of saddle points equals the number of bands, which is also the number of crossings if the corresponding braid is homogeneous. The surface in Figure~\ref{fig:mikami}c) is not a fiber surface, but a convenient visualisation of the isotopy between different fiber surfaces, combining the information from Figure~\ref{fig:mikami2} through Figure~\ref{fig:mikami4}. We can see ten crossings and nine saddle points. This is because the isotopy moves every band of a fiber to the crossing to its immediate right (or to the left if the crossings have the opposite sign). In the visualisation of the isotopy we thus have to display a band and the band to its immediate right, see for example the right part of Figure~\ref{fig:mikami3}, so that we end up with more crossings than saddle points.

Figures~\ref{fig:mikami}b) and c) also give a visual proof of Theorem~\ref{thm:thomosaddle} for homogeneous braids. We know that homogeneous braids are P-fibered. As mentioned in the previous section this implies that the corresponding open book in $S^3$ can be braided, that is, for a homogeneous braid $B$ there is a braid axis $O$ that is positively transverse to the fiber surfaces of $B$. The saddle point braid of the braided open book may then be defined without referring to a loop of polynomials $g_t$. It is given by the points of tangential intersections between the fiber surfaces of $B$ and the fiber disks of $O$. In Figure~\ref{fig:mikami}a) the fiber disks are vertical, so that the saddle points are the points where the tangent plane of the fiber surface is spanned by the vertical direction and the normal to the diagram plane. We can see from Figure~\ref{fig:mikami}c) that throughout the isotopy the saddle points are moving horizontally to the right. Again, if the sign of the crossing had been different, the saddle points would move to the left. It follows that the saddle points form a split unlink on $n-1$ components, where $n$ is the number of strands of the homogeneous braid $B$. Furthermore, we may choose an orientation for this unlink so that it becomes the trivial braid on $n-1$ strands relative to the same braid axis $O$.

A comment on this issue of orientation is in order. Since all the surfaces are oriented, the components of the derived bibraid, i.e., the link that is formed by the saddle points, split into two families, $pos$ and $neg$, see \cite{rudolph2}. Each saddle point $p$ is a tangential intersection point between a fiber surface and a fiber disk of $O$. Its component is in $pos$ if the orientations of the fiber surface and the fiber disk match at $p$. It lies in $neg$ if the two orientations do not match. Note that for homogeneous braids, this corresponds precisely to the sign of the corresponding band. Saddle points in rows with positive crossings lie on components in $pos$ and saddle points in rows with negative crossings lie in $neg$.

In earlier sections, when we discussed the saddle point braid in terms of loops of polynomials $g_t$, we naturally parametrised the components of the link consisting of saddle points by the variable $t$, which parametrises the loop $g_t$. Note that different values of $t\in S^1$ correspond to different fiber disks whose boundary is the braid axis $O$. In other words, we automatically picked an orientation for the derived bibraid that turned it into a braid relative to $O$, so that it is positively transverse to the fiber disks with boundary $O$.

In this subsection, we studied the motion of the saddle points as we vary the fiber surfaces (as opposed to varying the fiber disks). We can interpret this as parametrising the derived bibraid by the variable $\varphi\in S^1$ that parametrises the family of fiber surfaces. With the induced orientation the derived bibraid is positively transverse to the fiber surfaces, but in general it is not positively transverse to the fiber disks with boundary $O$. Therefore, with this induced orientation the derived bibraid is not a braid relative to $O$. Flipping the orientation of the components in $neg$ rectifies this and turns the derived bibraid into a braid with braid axis $O$, the saddle point braid.




\section{Singularities of semiholomorphic polynomials}\label{sec:sing}

Theorem~\ref{thm:semi} concerns the realisation of link types as the link of a weakly isolated singularity of a semiholomorphic polynomial $f$ while at the same time prescribing the link of the singularity of $f_u$. This type of topological flexibility of the jet of $f$ (while maintaining the link type $L$) may appear like a typical feature of real algebraic geometry. However, we will see in an example that this flexibility concerning the link of $f_u$ already appears in the complex setting, which is otherwise known to be very rigid.

First we recall some basic definitions concerning the Newton boundary of a mixed polynomial map $f:\mathbb{C}^2\to\mathbb{C}$ as described in \cite{oka}. We may write $f$ as $f(u,v)=\sum_{i,j,k,\ell\geq 0}c_{i,j,k,\ell}u^i\bar{u}^jv^k\bar{v}^\ell$ with all but finitely many coefficients $c_{i,j,k,\ell}$ equal to zero. The support of $f$, denoted by $supp(f)$ consists of all integer lattice points $(\mu,\nu)\in\mathbb{Z}^2$ such that there are non-negative integers $\mu_1,\mu_2,\nu_1,\nu_2$ with $\mu=\mu_1+\mu_2$, $\nu=\nu_1+\nu_2$ and $c_{\mu_1,\mu_2,\nu_1,\nu_2}\neq 0$.

The Newton polygon of $f$ is the convex hull of $supp(f)+(\mathbb{R}^+)^2$. Its boundary consists of a number of vertices and edges. The union of the vertices and compact edges is called the Newton boundary $\Gamma_f$ of $f$. There are several properties of $\Gamma_f$ that can be used to determine if $f$ has a (weakly) isolated singularity, such as convenience and Newton non-degeneracy \cite{oka} as well as inner non-degeneracy \cite{eder} or partial non-degeneracy \cite{eder2}. In the case of convenience and Newton non-degeneracy, and inner non-degeneracy it is known that the link of the singularity only depends on the terms of $f$ whose corresponding integer lattice points lie on $\Gamma_f$. The sum of these terms is called the principal part of $f$. In other words, adding terms above the Newton boundary does not change the fact that we have a (weakly) isolated singularity and it does not affect the link type. For more details on the Newton boundary of mixed polynomials we point the reader to \cite{oka, eder, eder2}.

\begin{example}
Consider the well-known example of $f(u,v)=u^p-v^q$ with $p,q\in\mathbb{N}$ with $p,q>1$, with an isolated singularity at the origin and the $(p,q)$-torus link $T_{p,q}$ as the link of the singularity. Since $f$ is holomorphic, we may also study $f_u$ and $f_v$ with regards to their singularities. However, it is easily seen that neither $f_u$ nor $f_v$ has a weakly isolated singularity at the origin.

Instead we may consider $F(u,v)=f(u,v)-uv^k+vu^{\ell}$ with $k,\ell\in\mathbb{N}$ with $k\geq q$ and $\ell\geq p$. First of all, since both added terms lie above the Newton boundary of $f$ and $f$ is convenient and Newton non-degenerate, it follows that $F$ has an isolated singularity at the origin, whose link is again the $(p,q)$-torus link $T_{p,q}$. But now we have $F_u(u,v)=pu^{p-1}+v^k$, which has an isolated singularity with link $T_{(p-1,k)}$, and $F_v(u,v)=u^{\ell}-qv^{q-1}$, which has an isolated singularity with link $T_{(\ell,q-1)}$. We thus have a lot of freedom for the links of $F_u$ and $F_v$ without changing the link of $F$.
\end{example}

This illustrates that the links of singularities of $F_u$ are not topological invariants. There are different equivalence relations on polynomial map germs with (weakly) isolated singularities related to topological properties (R-equivalence, A-equivalence, V-equivalence...). For example, we might say that two polynomials $f_1$ and $f_2$ are equivalent if there is a homeomorphism (or diffeomorphism) $h_1:(B_\varepsilon^4,0)\to (B_\varepsilon^4,0)$ and $h_2:(B_\varepsilon^2,0)\to (B_\varepsilon^2,0)$ of the 4-ball and 2-ball of radius $\varepsilon$, respectively, such that on $B_\varepsilon^4$ we have $f_2=h_2\circ f_1\circ h_1$, so that in particular the link of $f_1$ is ambient isotopic to that of $f_2$. While the link types of the singularities are topological invariants, that is, they do not depend on the representative of the equivalence class of polynomial maps, we cannot expect the same to be true for the link types of $f_u$. That is, in general we should expect that the link of $(f_1)_u$ is different from that of $(f_2)_u$ even if $f_1$ and $f_2$ are in the same equivalence class. After all, even the property of being semiholomorphic itself depends on the particular variables and therefore changes depending on which representative of the equivalence is under consideration. The links of singularities of first derivatives thus may be used to define much finer equivalence relations that treat the polynomial maps as jets. For this note that there is no particular reason to restrict to semiholomorphic polynomial and derivatives with respect to the complex variable. If $x_1,x_2,x_3,x_4$ are the real coordinates on $\mathbb{R}^4$, we may say that two real polynomial maps $f_1,f_2:\mathbb{R}^4\to\mathbb{R}^2$ are link-equivalent as 1-jets if both have (weakly) isolated singularities with ambient isotopic links and $(f_i)_j$ has a (weakly) isolated singularity for all $i=1,2$, $j=1,2,3,4$, with the link of $(f_1)_j$ ambient isotopic to the link of $(f_2)_j$. We might consider such an equivalence relation up to permutation of links of $(f_1)_j$ so that a simple permutation of the variables does not change the equivalence class. If a polynomial $f$ is semiholomorphic with $u=x_1+\rmi x_2$, then $V_{f_u}=V_{f_{x_1}}=V_{f_{x_2}}$, so that the link types of the real derivatives contain the information about the link of the derivative with respect to $u$.

We now turn our attention to the proofs of Theorem~\ref{thm:semi} and Theorem~\ref{thm:thomosemi}. First, we review some techniques to turn loops $g_t$ of polynomials as in the previous sections into semiholomorphic polynomials with (weakly) isolated singularities.

Since trigonometric polynomials are $C^1$-dense in the space of $2\pi$-periodic, $C^1$-functions, we may approximate any loop of polynomials of fixed degree $n$ by a loop $g_t$ whose coefficients are polynomials in $\rme^{\rmi t}$ and $\rme^{-\rmi t}$. Then we may construct a function $f:\mathbb{C}^2\to\mathbb{C}$ with a singularity at the origin via
\begin{equation}\label{eq:ffromg}
f(u,r\rme^{\rmi t})=r^{kn}g_t\left(\frac{u}{r^k}\right),
\end{equation}
where $k$ is some sufficiently large natural number.  The resulting function $f$ is a holomorphic polynomial with respect to $u$, but is only a polynomial in the variables $v=r\rme^{\rmi t}$ and $\bar{v}=r\rme^{\rmi t}$ if $g_t$ (and thus its roots as well) satisfy certain symmetry requirements, namely, $g_{t+\pi}=g_t$ or $g_{t+\pi}(u)=-g_t(-u)$, as shown in \cite{bode:inner2}. If the roots of $g_t$ are distinct, then the singularity at the origin is weakly isolated and its link is the closure of the braid formed by the roots of $g_t$. If the roots of $g_t$ form a P-fibered geometric braid, then the singularity is isolated.

The first author showed in \cite{bode:ak} that every link type arises as the link of a weakly isolated singularity of a semiholomorphic polynomial. In \cite{bode:thomo}, it was proved that closures of $T$-homogeneous braids are real algebraic. Both proofs are constructive and are based on a variation of the idea described above.

In both cases, we start with a loop $g_t$ that satisfies the desired symmetry constraints. However, its roots are not distinct for all $t\in[0,2\pi]$, so that they do not form a geometric braid, but a singular braid with intersection points. Defining $f$ from $g_t$ via Eq.~\eqref{eq:ffromg} we obtain a semiholomorphic polynomial, whose singularity at the origin is not weakly isolated. By construction, $f$ is \textbf{radially weighted homogeneous}, that is, its support in $\mathbb{Z}^2$ lies on a straight line of negative slope. In particular, the constructed $f$ is equal to its principal part and is Newton degenerate \cite{eder2}.

We may then find an additional term $A(v,\bar{v})$ such that $f+A$ has a (weakly) isolated singularity, whose link is of the desired form, that is, all singular crossings of the singular braids formed by the roots of $g_t$ are resolved in a very controlled way \cite{bode:ak}. 

The important observation in the context of saddle point braids is that adding $A(v,\bar{v})$ does not change the derivative with respect to $u$, i.e., $f_u=(f+A)_u$.

\begin{proof}[Proof of Theorem~\ref{thm:semi}]
The proof is a modification of the construction in \cite{bode:ak}. First of all, recall that for every link $L$ and every sufficiently large integer $n$ there is a braid on $n$ strands that closes to $L$. By the same arguments as in \cite{bode:satellite} we can find for every sufficiently large even integer $n-1$ a braid $B$ on $n$ strands such that the closure of $B^2$ contains $L$ as a sublink. In particular, for every pair of links $L_1$ and $L_2$ and every sufficiently large even $n$ there exist braids $B_1$ on $n$ strands and $B_2$ on $n-1$ strands, so that $B_1$ closes to $L_1$ and the closure of $B_2^2$ contains $L_2$ as a sublink.

By Theorem~\ref{thm:saddle} there is a loop of polynomials $h_t$ in $\widehat{X}_n$ such that the roots of $h_t$ form the trivial braid and the saddle point braid of $h_t$ is $B_2$. Furthermore, after a homotopy of this loop, which does not change the braid types, we may assume that $h_0$ is a real polynomial, i.e., all of its roots are real numbers. As in \cite{bode:ak} we can also construct a loop of real polynomials $g_t$ whose roots form a singular braid $B_{sing}$ such that for every singular crossing $c$ there is a choice of crossing sign $\varepsilon_c\in\{\pm 1\}$ that turns $B_{sing}$ into a classical braid that is isotopic to $B_1$ if each singular crossing $c$ is replaced by a classical crossing of sign $\varepsilon_c$. We may take the basepoint of $g_t$ to be $h_0$.

Consider now the loop that is formed by the composition of $h_t$ and $g_t$. We may approximate its coefficients by polynomials in $\rme^{\rmi t}$ and $\rme^{-\rmi t}$. Call the resulting loop of polynomials $G_t$. The approximation can be chosen arbitrarily $C^1$-close and we can interpolate the original coefficient functions, so that the roots of $G_t$ form the singular braid $B_{sing}$ and the corresponding saddle point braid is $B_2$. Note that the saddle point braid of $g_t$ was the trivial braid, since it is a real polynomial whose $n$ roots have at most multiplicity 2.

Therefore, the roots of the loop $G_{2t}$ form the singular braid $B_{sing}^2$ and its saddle point braid is $B_2^2$. Furthermore, the loop obviously satisfies the symmetry constraint $G_{2(t+\pi)}=G_{2t}$, so that $f$ defined from $G_{2t}$ as in Eq.~\eqref{eq:ffromg} is a radially weighted homogeneous semiholomorphic polynomial.

As in \cite{bode:ak} we may now find an extra additive term $A(v,\bar{v})$, which gives $f+A$ a weakly isolated singularity at the origin and resolves all singular crossings in a way that guarantees that the resulting link is the closure of $B_1$, that is, $L_1$. The only difference to the proof in \cite{bode:ak} is that $G_{2t}$ is not a loop of real polynomials. However, recall that all singular crossings of the roots of $G_t$ occur in intervals where $G_{2t}$ is an arbitrarily close approximation of $g_t$, which is a loop of real polynomials. This is sufficient for the arguments from \cite{bode:ak}, so that $f+A$ has a weakly isolated singularity whose link is $L_1$. Furthermore, as observed above, we have $(f+A)_u=f_u$, so that the zeros of $(f+A)_u$ are exactly $\bigcup_{j=1}^n(r^kc_j(t),r \rme^{\rmi t})\subset\mathbb{C}$ with $r\geq 0$, $t\in[0,2\pi]$ and $\bigcup_{j=1}^n(c_j(t),t)$ a parametrisation of the saddle point braid of $G_{2t}$. Since the critical points of $G_{2t}$ are all distinct (they form a braid), this implies that all roots of $(f+A)_u$ except the origin are regular points of $(f+A)_u$. This means that $(f+A)_u$ has a weakly isolated singularity at the origin and its link is the closure of $B_2^2$, which by construction contains $L_2$ as a sublink.
\end{proof}

We see from the proof that in general the link of $(f+A)_u$ has extra components besides $L_2$. This is because by construction, the saddle point braid of $G_{2t}$ is a 2-periodic braid $B_2^2$. If on the on the other hand $L_2$ is the closure of a 2-periodic braid $B_2^2$ on $n-1$ strands, such that $n$ is at least the braid index of $L_1$, then the link of the singularity of $(f+A)_u$ is exactly $L_2$ without any extra components. 

\begin{proof}[Proof of Theorem~\ref{thm:thomosemi}]
The theorem follows almost immediately from the construction in \cite{bode:thomo}, where we start with a loop $g_t$, whose roots form a P-fibered geometric braid whose closure is the unknot and that satisfies $g_{t+\pi}(u)=-g_t(-u)$. We deform the critical values of $g_t$, so that there are some values of $t$ for which there is a critical value equal to 0, which means that at that value of $t$ the roots of the deformed $g_t$ are not disjoint and thus form a singular braid as $t$ varies from 0 to $2\pi$. It is proved in \cite{bode:thomo} that this deformation can be done in such a way that the resulting critical values $v_j(t)$ still satisfy the condition $\tfrac{\partial \arg(v_j(t))}{\partial t}\neq 0$ for all $j=1,2,\ldots,n-1$ and all values of $t$ with $v_j(t)\neq 0$. Furthermore, we may assume that the coefficients of the loop of polynomials $\widehat{g}_t$ that corresponds to the endpoint of the deformation are polynomials in $\rme^{\rmi t}$ and $\rme^{-\rmi t}$.

By Theorem~\ref{thm:thomosaddle} we can do this procedure, starting with a loop $g_t$ whose saddle point braid is the trivial braid on $n-1$ strands. Since the critical values are distinct throughout the deformation, the same is true for the critical points. It follows that the saddle point braid of $\widehat{g}_t$ is also the trivial braid on $n-1$ strands.

We may then construct $f$ from $\widehat{g}$ as in Eq.~\eqref{eq:ffromg}. By \cite{bode:thomo} there is a polynomial $A(v,\bar{v})$ such that $f+A$ has an isolated singularity at the origin and its link is the closure of the given T-homogeneous braid. Again, adding $A$ does not affect $f_u$, so that the roots of $(f+A)_u$ are precisely $(r^kc_j(t),r\rme^{\rmi t})\subset\mathbb{C}^2$, where $r\geq 0$, $t\in[0,2\pi]$ and $\bigcup_{j=1}^{n-1}(c_j(t),t)$ is a parametrisation of the saddle point braid of $\widehat{g}_t$, which is the trivial braid on $n-1$ strands. Thus $(f+A)_u$ has a weakly isolated singularity at the origin, whose link is the unlink on $n-1$ components.
\end{proof}

\section*{Acknowledgements}

B.B. is supported by the European Union's Horizon 2020 research and innovation programme through the Marie Sklodowska-Curie grant agreement 101023017. M.H. is supported by JSPS KAKENHI JP18K03296. We would like to thank Mark Dennis for helpful discussions.


\end{document}